\documentclass{conm-p-l}
\usepackage{mathrsfs}
\usepackage{amsmath, amsthm, amssymb, enumerate, mathtools}

\usepackage{color}

\numberwithin{equation}{section}

\title[K\"ahler surfaces]{Sca\-lar-flat K\"ah\-ler surfaces whose Weyl\\
tensor annihilates the Ric\-ci form}

\author[A.\,Derdzinski, S.\,Kim, J.-h.\,Park]{Andrzej Derdzinski$^{1}$,
Sinhwi Kim$^{2}$, and JeongHyeong Park$^{2}$}
\address{$^{1}$\nnh\ Department\nnh\ of\nnh\ Mathematics,\nnh\ The\nnh\
Ohio\nnh\ State\nnh\ University,\nnh\ Columbus,\nnh\ OH\nnh\ 43210,\nnh\ USA}

\email{andrzej@math.ohio-state.edu}

\address{$^{2}$ Department of Mathematics, Sungkyunkwan University, Suwon, 16419, Korea}
\email{kimsinhwi@skku.edu, parkj@skku.edu}

\subjclass[2020]{53B35, 53C55}
\keywords{Weakly Einstein metric, K\"ah\-ler surface, anti-self-dual metric}

\def\bbC{{\mathchoice {\setbox0=\hbox{$\displaystyle\rm C$}\hbox{\hbox
to0pt{\kern0.4\wd0\vrule height0.9\ht0\hss}\box0}}
{\setbox0=\hbox{$\textstyle\rm C$}\hbox{\hbox
to0pt{\kern0.4\wd0\vrule height0.9\ht0\hss}\box0}}
{\setbox0=\hbox{$\scriptstyle\rm C$}\hbox{\hbox
to0pt{\kern0.4\wd0\vrule height0.9\ht0\hss}\box0}}
{\setbox0=\hbox{$\scriptscriptstyle\rm C$}\hbox{\hbox
to0pt{\kern0.4\wd0\vrule height0.9\ht0\hss}\box0}}}}

\def\txm{{T\hskip-2.9pt_x\w\hn M}}
\def\tam{{T\hskip-.2pt^*\hskip-2.3ptM}}

\newcommand{\sca}{\mathrm{s}}
\newcommand{\w}{^{\phantom i}}
\newcommand{\nnh}{\hskip-1pt}
\newcommand{\nnn}{\hskip-2.5pt}

\def\aj{A}
\def\bj{B}
\def\cj{C}
\def\dj{D}
\def\ej{E}
\def\fj{F}
\def\fe{\fj\hskip-2.3pt_1\w}
\def\fz{\fj\hskip-2.3pt_2\w}
\def\fd{\fj\hskip-2.3pt_3\w}
\def\fv{\fj\hskip-2.3pt_4\w}
\def\gj{G}
\def\hj{H}

\def\lj{L}

\def\sj{S}

\def\ly{\lambda}
\def\sy{\sigma}

\def\ry{\rho}

\usepackage{color}

\newcommand{\ric}{\mathrm{r}}
\newcommand{\sym}{\mathrm{b}}

\def\cwedge{\bigcirc\kern-1.07em\wedge\ }
\newcommand{\hyp}{\hskip.5pt\vbox
{\hbox{\vrule width2.5ptheight0.5ptdepth0pt}\vskip2pt}\hskip.5pt}
\def\vt{{\tau\hskip-4.55pt\iota\hskip.6pt}}
\def\hs{\hskip.7pt}
\def\hh{\hskip.4pt}
\def\nh{\hskip-.7pt}
\def\hn{\hskip-.4pt}
\def\w{^{\phantom i}}

\def\vg{\varGamma}

\newtheorem{thm}{Theorem}[section]
\newtheorem{lem}[thm]{Lemma}

\theoremstyle{definition}

\theoremstyle{remark}
\newtheorem{rem}[thm]{Remark}

\numberwithin{equation}{section}

\begin{document}
\begin{abstract}
We conjecture that any scalar-flat K\"ahler surface
in which the Weyl tensor acting on $\,2$-forms annihilates the Ric\-ci form
must be either Ric\-ci-flat or locally isometric to  
a Riemannian product of two real surfaces with mutually 
opposite nonzero constant Gauss\-i\-an curvatures. This amounts to the
nonexistence of proper weakly Ein\-stein anti-self-dual K\"ahler surfaces.
We prove the above conjecture in three special cases: when the manifold
is compact, when 
one of the Ric\-ci eigen\-dis\-tri\-bu\-tions is in\-te\-gra\-ble, and when 
the norms of the Ric\-ci and Weyl tensors are functionally dependent.
\end{abstract}

\maketitle

\setcounter{thm}{0}
\renewcommand{\thethm}{\Alph{thm}}
\section{Introduction}
Following Euh {\it et al.}\ \cite[p.\,112]{euh-park-sekigawa-rm} we call a
Riemannian
four-man\-i\-fold {\it weakly Ein\-stein\/} if the triple contraction of its
curvature tensor against itself is a functional multiple of the metric, and
then refer to it as {\it proper\/} \cite{derdzinski-park-shin-ct} if it is
neither Ein\-stein, nor con\-for\-mal\-ly flat and scalar-flat. As shown by
Euh {\it et al.}\ \cite[Cor.\,2.2]{euh-park-sekigawa-rm} and
Garc\'\i a-R\'\i o {\it et al.}\
\cite[Thm.\,2(i)]{garcia-rio-haji-badali-marino-villar-vazquez-abal}, 
the weakly Ein\-stein property follows in both cases excluded by the 
word {\it proper}. See also \cite[formula (4.7)]{derdzinski-euh-kim-park}.

Proper weakly Ein\-stein metrics that are also locally homogeneous
were constructed by Euh {\it et al.}\
\cite[Expl.\,3.7]{euh-park-sekigawa-ms}, and classified by 
Arias-Marco and Kowalski \cite[p.\,23]{arias-marco-kowalski}. Other related
results can be found, for instance, in
\cite{caeiro-oliveira-marino-villar,marino-villar,wang-zhang}.

There exist proper weakly Ein\-stein K\"ah\-ler surfaces
\cite[Sect.\,12]{derdzinski-euh-kim-park}, and it is known
\cite[Theorem 1.2]{derdzinski-euh-kim-park} that they are never self-dual
relative to the standard orientation.

It remains an open question, however, whether a proper weakly Ein\-stein
K\"ah\-ler surface can be {\it anti-self-dual}. We conjecture that this is
{\it not\/} the case, which -- see Remark~\ref{equiv} below --
is nothing else than 
the following claim:
\begin{equation}\label{cnj}
\begin{array}{l}
\mathrm{any\hs\ scalar}\hyp\mathrm{flat\hs\ K}\ddot{\mathrm{a}}\mathrm{hler\ 
surface\hs\ in\ which\ the\ Weyl\ tensor\ acting}\\
\mathrm{on\ }\,2\hyp\mathrm{forms\ annihilates\ the\ Ric\-ci\
form\ must\ have\ parallel\ Ric\-ci}\\
\mathrm{tensor;\ this,\nh\ equivalently,\hn\ means\ that\ it\
is\ either\ Ric\-ci}\hyp\mathrm{flat,\nnh\ or\ }\\
\mathrm{locally\ isometric\ to\ a\ Riemannian\ product\ of\ two\ real\
surfaces}\\
\mathrm{with\ mutually\ opposite,\nnh\ nonzero\ constant\nh\ Gaussian\
curvatures.}
\end{array}
\end{equation}
The above phrasing of our conjecture shows that it is of independent
interest, aside from questions pertaining to weakly Ein\-stein manifolds.
With the symbols
\begin{equation}\label{smb}
M\nh,\,\,\,J,\,\,\,g,\,\,\nabla\nnh,\,\,R,\,\,\,\ric,\,\,W\nnh\nh,\,\,\,
\sca,\,\,\,\omega,\,\,\,\rho
\end{equation}
always standing for the underlying complex surface,
the com\-plex-struc\-ture tensor, the K\"ah\-ler metric in
question, its Le\-vi-Ci\-vi\-ta connection, curvature, Ric\-ci and Weyl
tensors, scalar curvature, K\"ah\-ler form and Ric\-ci form, 
Conjecture (\ref{cnj}) states -- according to formula (\ref{cff}) below --
that, in a K\"ah\-ler surface,
\begin{equation}\label{rds}
\mathrm{if\ }\,\sca\,\mathrm{\ and\ }\,W\nnh\nh\rho\,\mathrm{\ vanish\ 
identically,\ then\ so\ does\ }\,W\nnh\mathrm{\ or\ }\,\rho.
\end{equation}
Another version of (\ref{cnj}) uses the conditions
\begin{equation}\label{frm}
\mathrm{i)}\hskip5ptg(e\hn_j\w,e\hn_k\w)=\delta\hn_{jk}\w,\qquad
\mathrm{ii)}\hskip5pt(J\hn e\hn_1\w,\hs J\hn e_2\w,\hs J\hn e_3\w,\hs
J\hn e\nh_4\w)=(e_2\w,\hs-e\hn_1\w,\hs e\nh_4\w,\hs-e_3\w)
\end{equation}
imposed on a smooth local frame $\,e\hn_1\w,e_2\w,e_3\w,e\nh_4\w$, along
with the requirement that
\begin{equation}\label{rot}
\begin{array}{l}
\mathrm{the\,\ only\,\ nonzero\,\ curvature\,\ components}\\
R\hn_{ijkl}\w\,\mathrm{\ be\ the\ ones\ algebraically\
related\ to}\\
R\hn_{1212}\w=-\nnh\ly,\nnh\nh\quad 
R\hn_{1313}\w=R\hn_{2424}\w=R\hn_{1423}\w=\sy,\\
R\hn_{1414}\w=R\hn_{2323}\w=R\hn_{1342}\w=-\sy,\nnh\quad R\hn_{3434}\w=\ly
\end{array}
\end{equation}
for some functions $\,\ly,\sy$, where $\,R\hn_{ijkl}\w
=R(e\hn_i\w,e\nh_j\w,e\hn_k\w,e\hn_l\w)$. 
At the end of Sect.\,\ref{pr} we have this equivalent phrasing 
of Conjecture (\ref{cnj}): in an al\-most-com\-plex four-man\-i\-fold,
\begin{equation}\label{stt}
\mathrm{if\ \ (\ref{frm})\,}-\mathrm{\,(\ref{rot})\ \ hold\ and\ 
}\,\nabla\nnh J=0\mathrm{,\ then\ }\,\ly=0\,\mathrm{\ or\ }\,\sy=0\,\mathrm{\ 
identically.}
\end{equation}
Our argument for plausibility of Conjecture (\ref{cnj}) is twofold. First, 
we establish (\ref{cnj}) in the following
three special cases. For proofs, see Sect.\,\ref{pr}, \ref{pc} and \ref{ct}.
\begin{thm}\label{thrsc}
Conjecture\/ {\rm(\ref{cnj})} is true if one also assumes that
\begin{enumerate}
\item[{\rm(i)}] $M\,$ is compact, or
\item[{\rm(ii)}] one of the Ric\-ci eigen\-dis\-tri\-bu\-tions is
in\-te\-gra\-ble, or
\item[{\rm(iii)}] the norms of the Ric\-ci and Weyl tensors are functionally
dependent.
\end{enumerate}
The conclusion about\/ {\rm(i)} remains valid 
beyond the K\"ah\-ler
category. Namely, any compact oriented Riemannian four-man\-i\-fold which is 
sca\-lar-flat, anti-self-dual and weakly Ein\-stein, must be 
Ric\-ci-flat or con\-for\-mal\-ly flat.
\end{thm}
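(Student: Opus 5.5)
The approach is to work throughout in the local frame of (\ref{frm})--(\ref{rot}). This is legitimate wherever $\ly\sy\ne0$, so we argue on that open set and show it must be empty in each of the three cases. Conjecture (\ref{cnj}) is equivalent to (\ref{stt}), so it suffices to prove that $\ly\sy=0$ identically. In the frame, the Ric\-ci tensor has eigenvalues $\pm$ (a multiple of) $\sy$ on the two $J$-invariant eigen\-dis\-tri\-bu\-tions $\mathrm{span}(e_1,e_2)$ and $\mathrm{span}(e_3,e_4)$. The anti-self-dual Weyl tensor has a single essential component, a multiple of $\ly$. Hence $|\ric|^2$ is a constant times $\sy^2$ and $|W|^2$ a constant times $\ly^2$.

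The first step is to write out the second Bianchi identity together with $\nabla J=0$, using the connection forms of the frame. Since $\nabla J=0$, the connection reduces to $\mathfrak u(2)$: there are only four independent connection $1$-forms, one for each complex line together with the complex off-diagonal form $\Gamma$ mixing the two eigen\-dis\-tri\-bu\-tions. The Bianchi identities then give first-order equations expressing $d\ly$ and $d\sy$ through $\Gamma$. I expect an algebraic relation of the form $d\ly\wedge d\sy=0$, or at least $d\sy$ determined by $\Gamma$ and $\sy$, $d\ly$ by $\Gamma$ and $\ly,\sy$. In addition, integrability of the eigen\-dis\-tri\-bu\-tion $\mathrm{span}(e_1,e_2)$ (resp.\ $\mathrm{span}(e_3,e_4)$) is equivalent to the vanishing of the corresponding half of $\Gamma$, namely its $(1,0)$ or $(0,1)$ part. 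Computing this system (the structure equations) is routine but long.

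For case (ii), suppose one eigen\-dis\-tri\-bu\-tion is integrable, so half of $\Gamma$ vanishes. Substituting into the Bianchi system and then applying the Cartan structure equation $d\Gamma+\dots=$ curvature should force an algebraic identity in $\ly,\sy$ alone. That identity, combined with the residual ODEs, yields $\ly\sy=0$; we would then conclude by analyticity, since scalar-flat K\"ah\-ler metrics are real-analytic.

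For case (iii), functional dependence of $\ly^2$ and $\sy^2$ means $d\ly\wedge d\sy=0$, so locally $\ly=f(\sy)$. Plugging this into the Bianchi equations reduces them to constraints on $\Gamma$ that should imply integrability of an eigen\-dis\-tri\-bu\-tion, which brings us back to (ii). Alternatively, differentiating once more and using the curvature equations should give a polynomial identity forcing $f'$ to behave impossibly unless $\ly\sy=0$. I expect this step to be the main obstacle, since it requires the second-order integrability conditions, and possibly a case split according to whether $d\sy$ vanishes on an open set.

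For (i), and its non-K\"ah\-ler extension, the plan is to use a Bochner/Weitzenb\"ock argument. The weakly Ein\-stein condition for scalar-flat anti-self-dual metrics reduces to $W^-\circ\ric_0=0$, since the mixed terms vanish. On a scalar-flat anti-self-dual manifold, the traceless Ric\-ci tensor is a harmonic $\Lambda^+\otimes\Lambda^-$-valued form, by the contracted Bianchi identity together with $W^+=0$ and $\sca=0$. Its Weitzenb\"ock formula has curvature term $-\langle W^-(\ric_0),\ric_0\rangle$ plus scalar terms that vanish. The hypothesis kills this term, so $\ric_0$ is parallel. Then either $\ric_0=0$ and the manifold is Ric\-ci-flat, or the holonomy is reduced and the metric is locally a product of surfaces, which is con\-for\-mal\-ly flat when $\sca=0$. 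Integrating $\Delta|\ric|^2$ over the compact $M$ supplies the needed sign.
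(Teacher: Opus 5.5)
\textbf{Cases (ii) and (iii).} What you give here is a plan, not a proof. The whole content of these cases is the computation you call routine, and the mechanisms you anticipate are not the ones that operate. Note first that you have swapped the two functions: by (\ref{nzc}) the Ricci eigenvalues are $\mp\lambda$, while $W$ acts on $\Lambda^-$ with eigenvalues $0,\pm2\sigma$.

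More importantly, the first-order consequences of the Bianchi identity are the twelve linear equations (\ref{nel}). When $\lambda\sigma\ne0$ they merely solve for the twelve components of $L,F,G$ in terms of $d\lambda,d\sigma$, as in (\ref{sol}), and they put no restriction at all on $d\lambda,d\sigma$. So no relation like $d\lambda\wedge d\sigma=0$ comes out; if it did, case (iii) would settle the whole conjecture. Likewise, inserting $\sigma=f(\lambda)$ only expresses $L,F,G$ through $d\lambda$. It cannot force $\lambda_3=\lambda_4=0$ or $\lambda_1=\lambda_2=0$, which is what integrability means here (Remark~\ref{intgr}).

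Every obstruction lives at second order. You must differentiate (\ref{evp}) with (\ref{sol}) substituted, which gives the thirty-six equations of Sect.~\ref{ts} in $\lambda_{ij},\sigma_{ij},S_{ij}$. You must then normalize the frame so that $\lambda_4=0<\lambda_3$ (Remark~\ref{lacst}, Lemma~\ref{cnsym}), and find explicit combinations that eliminate the second derivatives.
\begin{itemize}
\item In case (ii), integrability becomes $\lambda_1=\lambda_2=0$. Then ($b$)--($b_3$), ($c$), ($c_1$), ($j$), ($h$), ($d$)--($d_3$) lead to $\lambda^4-5\lambda^2\sigma^2+12\sigma^4=0$, which has no real solution with $\lambda\sigma\ne0$. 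An unspecified ``algebraic identity in $\lambda,\sigma$'' could perfectly well have such solutions.
\item In case (iii) the logic runs opposite to your reduction. Case (ii) is used to guarantee $\lambda_1^2+\lambda_2^2\ne0$ generically. The second-order constraints (a)--(b) of the Introduction then force $8\lambda\sigma\sigma'=12\sigma^2-\lambda^2$. Finally, a combination of ($d$)--($d_3$), ($h$), ($h_4$) gives $8\lambda^2\sigma(\lambda_1^2+\lambda_2^2+\lambda_3^2)=0$.
\end{itemize}
Your ``alternative'' route for (iii) points in this direction, but none of these identities is actually produced.

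\textbf{Case (i) and the final clause.} For the K\"ahler case (i) your Bochner argument is fine and is essentially the paper's. The Ricci form of a scalar-flat K\"ahler surface is closed and anti-self-dual, hence harmonic. The Weitzenb\"ock formula makes $\nabla^*\nabla\rho$ a multiple of $W\rho=0$, and integration gives $\nabla\rho=0$.

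For the non-K\"ahler final clause, however, your Weitzenb\"ock formula is incomplete. When $W^+=0=\mathrm{s}$, the traceless Ricci tensor $B\colon\Lambda^-\to\Lambda^+$ \emph{is} the curvature of the connection in $\Lambda^+$; that is why $d^\nabla B=0$. For exactly this reason, the bundle-curvature term in the Weitzenb\"ock formula is quadratic in $B$: it is a multiple of the cofactor of $B$, equivalently of $(\mathrm{r}^2)_0$. It therefore contributes a multiple of $\det B$, or of $\mathrm{tr}\,\mathrm{r}^3$, to $\langle\nabla^*\nabla B,B\rangle$.

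This term is genuinely present. For any conformally flat scalar-flat metric, $\nabla^*\nabla\mathrm{r}=-2(\mathrm{r}^2)_0$, which is nonzero unless the Ricci eigenvalues have the form $a,a,-a,-a$. The term does vanish in two situations:
\begin{itemize}
\item where $W^-\ne0$, since $W^-$ then has rank $\ge2$ and weak Einsteinness forces $\mathrm{rank}\,B\le1$;
\item identically in the K\"ahler case, where $B$ corresponds to $\omega\otimes\rho$.
\end{itemize}
On the interior of $\{W^-=0\}$, however, it has no sign. You would need an extra input there, for example real-analyticity of scalar-flat anti-self-dual metrics (they are Bach-flat with constant scalar curvature), to conclude that $W^-\equiv0$ or $\det B\equiv0$.

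The paper sidesteps this issue entirely. By Chern--Weil, such metrics minimize $\|R\|^2$, so (\ref{grd}) together with weak Einsteinness gives $\mathrm{d}^*\mathrm{d}\,\mathrm{r}=0$. Integration then gives harmonic curvature, and the classification in \cite{derdzinski-park-shin-cp} finishes.
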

Both (ii), (iii) hold if the norm of the Ric\-ci tensor is constant
(Remark~\ref{lycst}).

Secondly, we provide evidence supporting Conjecture (\ref{cnj}) in the form
of calculations related to the 
system of exterior equations on a four-man\-i\-fold,
involving two functions $\,\ly,\sy\,$ and eight
$\,1$-forms $\,\aj,\bj,\cj,\dj,\fj\nh,\gj,\lj,\sj$, which reads
\begin{equation}\label{evp}
\begin{array}{l}
\mathrm{at\ each\ point\ }\,\aj,\bj,\cj,\dj\,\mathrm{\ are\ linearly\
independent,}\\
\mathrm{while,\ for\ }\,\ej\hn=(\sj+\lj)/2\,\mathrm{\ and\
}\,\hj\hn=(\sj-\lj)/2, \\
d\aj=\bj\wedge\ej+\cj\wedge\fj\nh+\dj\wedge\gj,\\
d\bj=-\hn\aj\wedge\ej+\dj\wedge\fj-\cj\wedge\gj,\\
d\hh\cj=-\hn\aj\wedge\fj\nh+\bj\wedge\gj+\dj\wedge\hj,\\
d\dj=-\bj\wedge\fj\nh-\aj\wedge\gj-\cj\wedge\hj,\\
d\fj\hs=\,\lj\wedge\gj\,
+\,\sy(\aj\wedge\cj\,-\,\dj\wedge\bj),\\
d\hh\gj\,=\,-\lj\wedge\fj\hs
\,-\sy(\aj\wedge\nh\dj\,-\,\bj\wedge\cj),\\
d\lj\,=\,-4\fj\nh\wedge\gj\,-\,\ly(\aj\wedge\bj+\cj\wedge\dj),\\
d\sj\,=\,-\ly(\aj\wedge\bj-\cj\wedge\dj).
\end{array}
\end{equation}
As we point out in Sect.\,\ref{ae}, Conjecture (\ref{cnj}) is equivalent
to the following statement: {\it the system\/} (\ref{evp}) {\it implies
that one of the functions\/ $\,\ly,\sy\,$ vanishes identically.} 

We emphasize here the {\it constructive aspect of the system\/} (\ref{evp}):
if Conjecture (\ref{cnj}) turns out to be false, (\ref{evp}), with
$\,\ly\sy\ne0$, will serve as
a local characterization, at generic points, of all the counterexamples,
that is, according to (\ref{rds}), of those K\"ah\-ler surfaces with 
$\,\sca=0\,$ and $\,W\nnh\nh\rho=0\,$ in which $\,W\nnh\nh\ne\hs0\,$ and
$\,\rho\ne0\,$ everywhere.

Geometrically, $\,\aj,\bj,\cj,\dj\,$ represent $\,J\,$ and $\,g\,$ by being 
$\,g$-or\-tho\-nor\-mal and having $\,(J^*\hskip-3pt\aj,J^*\nh\cj)
=(-\bj,-\hn\dj)$, while
$\,\ej\nh,\fj\nh,\gj,\hj\,$ are the connection $\,1$-forms in the frame
$\,e\hn_1\w,e_2\w,e_3\w,e\nh_4\w$ dual to $\,\aj,\bj,\cj,\dj$, there being
just four of them, as they together constitute a $\,1$-form valued in the Lie 
algebra $\,\mathfrak{u}(2)$. In terms of $\,e\hn_1\w,e_2\w,e_3\w,e\nh_4\w$, 
we have (\ref{frm}), and the first four exterior equations of
(\ref{evp}) state that $\,\nabla\hs$ is tor\-sion-free, while the last four 
amount precisely to (\ref{rot}). See Lemma~\ref{eqvto}.

Remark~\ref{omrho} provides another interpretation of (\ref{rot}): in
(\ref{smb}),
\begin{equation}\label{krf}
\omega=\aj\wedge\bj+\cj\wedge\dj,\qquad\rho=-\ly(\aj\wedge\bj-\cj\wedge\dj), 
\end{equation}
the length $\,\sqrt{2\,}$
mutually orthogonal anti-self-dual $\,2$-forms
\begin{equation}\label{egf}
\zeta\hs=\hs-\nnh\aj\wedge\bj+\cj\wedge\dj,\quad\eta\hs=\hs-\nnh\aj\wedge\cj\hn
+\hn\dj\wedge\bj,\quad\theta\hs=\hs-\nnh\aj\wedge\nh\dj\hs+\hn\bj\wedge\cj
\end{equation}
are eigenforms of $\,W\nnh\hn$ for the eigen\-value functions
$\,0,\,2\sy,-\nnh2\sy$, while $\,2\hh\gj,2\fj\nh,\lj\,$ are -- due to formula
(\ref{lmn}) -- the connection $\,1$-forms, relative to 
the local trivialization (\ref{egf}), of the Le\-vi-Ci\-vi\-ta connection in
the bundle of anti-self-dual $\,2$-forms. 

If $\,\ly\sy\ne0$, (\ref{evp}) has further consequences. Applying $\,d\,$ to
it, in Sect.\,\ref{ae}, we derive explicit rational expressions (\ref{sol})
for the component functions of $\,\lj,\fj\nh,\gj$ in terms of 
$\,\ly,\hs\sy,\,d\ly\,$ and $\,d\sy\hn$. These expressions turn 
(\ref{evp}) into a system of thir\-ty-six equations imposed on
$\,\ly,\sy$, the component functions $\,\sj\nh_i\w,\ly_i\w,\sy\nnh_i\w$
of $\,\sj,\,d\ly,\,d\sy\nh$, and their first-or\-der directional derivatives
$\,\sj\nh_{ij}\w,\ly\hh_{ij}\w,\sy\nnh_{ij}\w$. For now, let us choose to
ignore the six equations involving $\,\sj\nh_{ij}\w$. The remaing thirty
equations
imply that
\begin{enumerate}
\item[{\rm(a)}] $(12\sy^2\nh-\ly^2)(\ly_2\w\ly_3\w-\ly_1\w\ly_4\w)
=2\ly[\mu_-\w(\ly_2\w\sy\nnh_3\w-\ly_1\w\sy\hskip-1.5pt_4\w)
+\mu_+\w(\ly_3\w\sy\nnh_2\w-\ly_4\w\sy\nnh_1\w)]$,
\item[{\rm(b)}] $(12\sy^2\nh-\ly^2)(\ly_1\w\ly_3\w+\ly_2\w\ly_4\w)
=2\ly[\mu_+\w(\ly_1\w\sy\nnh_3\w+\ly_2\w\sy\hskip-1.5pt_4\w)
+\mu_-\w(\ly_3\w\sy\nnh_1\w+\ly_4\w\sy\nnh_2\w)]$,
\end{enumerate}
where $\,\mu_\pm\w=2\sy\pm\ly$. See Remark~\ref{cmbin}, where we also
describe the reason why the number of equations may be further reduced by 
six, to twen\-ty-four: the equations satisfy four 
lin\-e\-ar-de\-pen\-dence relations, and two of their functional combinations
result in the constraints (a) -- (b), thus allowing us to eliminate two more
equations, and replace them with (a) -- (b). However, taking the directional
derivatives of (a) and (b), we can in turn augment our
system with eight more equations, raising their total number to thir\-ty-two.
They are linear in the thir\-ty-six 
quantities 
$\,\sj\nh_i\w,\ly\hh_{ij}\w,\sy\nnh_{ij}\w$. This last number can be
reduced to thir\-ty-two since -- as explained in the lines following 
(\ref{lfz}) -- we are free to assume that $\,\ly_4\w=0\,$ by suppressing a
rotational symmetry inherent in the system (\ref{frm}) -- (\ref{rot}).
Consequently, 
there are now as many linear equations as there are unknowns. What makes it 
plausible to expect that no nontrivial solutions exist is the presence
of six more equations, involving $\,\sj\nh_{ij}\w$, which we earlier chose
to ignore.

In the case of {\it compact\/} four-man\-i\-folds, \cite[Prop.\,4.70]{besse},
reproduced below as formula (\ref{grd}), seems to hint at some variational
aspect of the weakly Ein\-stein condition. Closer inspection shows that 
this is not the case, in any interesting way: {\it no proper 
weakly Ein\-stein metric is a critical point of\/ $\,\|\hn R\|^2\nh$, the
squar\-ed $\,L\nnh^2$ norm of the curvature tensor}. See (\ref{npw}). On
the other hand, when the manifold is also oriented,
{\it sca\-lar-flat anti-self-dual Riemannian metrics, when they exist, are 
well known -- cf.\ {\rm(\ref{min})} -- to be precisely the absolute minima of
the functional\/} $\,\|\hn R\|^2\nh$. The last class includes sca\-lar-flat
K\"ah\-ler-sur\-face metrics, numerous nontrivial examples of which are
exhibited in \cite{lebrun} and \cite{kim-lebrun-pontecorvo}, while the two
italicized statements are used, in Sect.\,\ref{pr}, to prove 
the final clause of Theorem~\ref{thrsc}.

The authors wish to thank Claude LeBrun for helpful comments and
clarifications concerning the subject matter of the preceding paragraph
and the way in which the final clause of Theorem~\ref{thrsc} arises from
variational considerations.

\setcounter{thm}{0}
\renewcommand{\thethm}{\thesection.\arabic{thm}}
\section{Preliminaries}\label{pr}
\setcounter{equation}{0}
All manifolds are assumed connected, all mappings and tensor fields smooth.

We will use the fact that, given
a local frame $\,e\hn_i\w$ in a manifold and its dual $\,e^i\nh$,
\begin{equation}\label{cij}
2\hh d\hh e^k\nh=-C_{ij}^ke^i\wedge e^j\mathrm{\ for\ the\ functions\
}\,C_{ij}^k\mathrm{\ with\ }\,[\hh e\hn_i\w,e\nh_j\w]=C_{ij}^k e\hn_k\w.
\end{equation}
For the curvature, Weyl, and Ric\-ci tensors of a Riemannian metric $\,g\,$
in any dimension $\,n\ge3\,$ we use the sign convention such 
that $\,\ric\hn_{ij}\w=g^{pq}\nh R_{ipjq}\w$. Thus, 
\[
\begin{array}{l}
W_{\!ijpq}\w=R_{ijpq}\w
-\displaystyle{\frac1{n-2}}\,(g_{ip}\w\ric_{jq}\w
+g_{jq}\w\ric_{ip}\w-g_{jp}\w\ric_{iq}\w
-g_{iq}\w\ric_{jp}\w)\\
\hskip61.5pt
+\,\displaystyle{\frac{\sca}{(n-1)(n-2)}}
\hs(g_{ip}\w g_{jq}\w-g_{jp}\w g_{iq}\w)\hh,
\end{array}
\]
$\sca\,$ being the scalar curvature. Consequently,
if $\,n=4\,$ and $\,e\hn_1\w,e_2\w,e_3\w,e\nh_4\w$ is a local 
$\,g$-or\-tho\-nor\-mal frame, one has (\ref{rot}) if and only if 
\begin{equation}\label{nzc}
\begin{array}{l}
\mathrm{the\ only\ nonzero\ components\ of\ the\ Weyl\ and}\\
\mathrm{Ric\-ci\ tensors\ are\ those\ algebraically\
related\ to}\\ 
W_{\!1313}\w=W_{\!2424}\w=W_{\!1423}\w=\sy,\,\,\quad\ric_{11}\w=\ric_{22}\w
=-\nnh\ly,\\
W_{\!1414}\w=W_{\!2323}\w=W_{\!1342}\w=-\sy,\,\,\quad\ric_{33}\w
=\ric\hn_{44}\w=\ly.
\end{array}
\end{equation}
As shown by Tanno \cite{tanno}, a  
con\-for\-mal\-ly flat K\"ah\-ler surface is
\begin{equation}\label{cff}
\begin{array}{l}
\mathrm{either\hs\ flat,\,\hh\ or\hs\ locally\hs\ isometric\hs\ to\hs\ a\hs\
Riemannian\hs\ product\hs\ of\hs\ two\hs\ real}\\
\mathrm{surfaces\nh\ with\nh\ mutually\nh\ opposite,\nh\nnh\ nonzero\nh\
constant\nh\ Gaussian\nh\ curvatures.}
\end{array}
\end{equation}
On the other hand, it is well known -- see, for instance,
\cite[p.\,459]{derdzinski-00} -- that
\begin{equation}\label{asd}
\mathrm{anti}\hyp\mathrm{self}\hyp\mathrm{dual\-i\-ty\ of\ a\
K}\ddot{\mathrm{a}}\mathrm{hler\ surface\ amounts\ to\ its\
sca\-lar}\hyp\mathrm{flat\-ness.}
\end{equation}
\begin{rem}\label{equiv}According to parts (a) and (d) in 
\cite[Theorem 5.1]{derdzinski-euh-kim-park}, a sca\-lar-flat K\"ah\-ler 
surface is weakly Ein\-stein if and only if $\,W\nnh\nh\rho=0$.
Thus, (\ref{cnj}) is, due to (\ref{asd}) and (\ref{cff}), 
equivalent to the conjecture stated immediately before (\ref{cnj}).
\end{rem}
\begin{proof}[Proof of Theorem~\ref{thrsc}, {\rm case (i)}]The equivalence
between
parts (d) and (e) in \cite[Theorem 5.1]{derdzinski-euh-kim-park} shows that a
K\"ah\-ler surface with $\,\sca=0\,$ and $\,W\nnh\nh\rho=0$ necessarily has
$\,\Delta\ry=0\,$ or, in local coordinates, $\,\ry_{ij,k}\w{}^k\nh=0$.
In the compact case, integrating the inner product
$\,\langle\ry,\Delta\ry\rangle$ by parts, we get $\,\nabla\nnh\ry=0$, as
required.
\end{proof}
In the following argument $\,\mathrm{d}^*$ is the formal ad\-joint of the 
{\it Co\-daz\-zi operator\/} $\,\mathrm{d}$, cf.\
\cite[Sect.\,16.5]{besse}, sending a 
twice-co\-var\-i\-ant symmetric tensor field $\,\sym\,$ on a Riemannian 
manifold to $\,\mathrm{d}\sym\,$ given, in coordinates, by 
$\,[\mathrm{d}\sym]_{ijk}\w=\sym_{kj,\hh i}\w-\,\sym_{ki,\hs j}\w$.
\begin{proof}[Proof\hn\ of\hn\ the\hn\ final\hn\ clause\hn\ of\hn\
Theorem~\ref{thrsc}]From the Chern-Weil integral formulae for characteristic
numbers one obtains
\[
12\hh\|\hn R\|^2\nh 
=\|\sca\|^2\nh+48\hh\|W^+\nnh\|^2\nh-96\pi^2[\hh\chi(M)+3\vt(M)],
\]
where $\,\chi(M)\,$
and $\,\vt(M)\,$ are the Eu\-ler characteristic and signature of a compact
oriented Riemannian four-man\-i\-fold $\,(M\nh,g)$, and $\,\|\hskip2.3pt\|\,$
denotes the $\,L\nnh^2$ norm. One verifies this, for instance, by subtracting 
12 times \cite[formula\,(25.1)]{derdzinski-00} from
\cite[formula\,(25.8)]{derdzinski-00}. Thus, given a compact oriented
four-man\-i\-fold $\,M\nh$,
\begin{equation}\label{min}
\begin{array}{l}
\mathrm{if\hh\ }\,M\,\mathrm{\ admits\hh\ a\hh\ scalar}\hyp\mathrm{flat\
anti}\hyp\mathrm{self}\hyp\mathrm{dual\hh\ Riemannian\hh\ me}\hyp\\
\mathrm{tric\ }\,\,g\mathrm{,\ any\ such\ }\hs\,g\hs\,\mathrm{\ realizes\ the\ 
absolute\ minimum\ value}\\
\mathrm{of\hh\ the\hh\ functional\hh\ }\hs\|\hn R\|^2\nh\mathrm{\ in\hh\ the\hh\
space\hh\ of\hh\ all\hh\ metrics\hh\ on\nnh\ }\,M\nh,
\end{array}
\end{equation}
the minimum being $\,-8\pi^2[\hh\chi(M)+3\vt(M)]$. On the other hand, 
according to \cite[Prop.\,4.70]{besse}, on a compact manifold of dimension
four, with $\,\mathrm{d},\mathrm{d}^*$ defined above,
\begin{equation}\label{grd}
\begin{array}{l}
\mathrm{at\ any\ metric\ }\,g\mathrm{,\ one}\hyp\mathrm{half\ of\ the\
gradient\ of\ the}\\
\mathrm{functional\hn\ }\,\|\hn R\|^2\mathrm{\hn\ equals\hn\ 
}\,\mathrm{d}^*\hn\mathrm{d}\hh\ric\,\mathrm{\hn\ minus\hn\ the\hn\
trace\-less}\\
\mathrm{part\ of\ the\ triple\ contraction\ of\ }\nh\,R\nh\,\mathrm{\ against\
itself.}
\end{array}
\end{equation}
Thus, if $\,g\,$ satisfies the assumptions in the final clause of
Theorem~\ref{thrsc}, the resulting equality 
$\,\mathrm{d}^*\hn \mathrm{d}\hh\ric=0\,$ implies, via integration by parts,
that $\,\mathrm{d}\hh\ric=0$, and so $\,g\,$ has harmonic curvature
\cite[Sect.\,16.33]{besse}.
Our assertion now follows from \cite[Thm.\,22.3]{derdzinski-park-shin-cp}.
In the K\"ah\-ler case, instead of invoking \cite{derdzinski-park-shin-cp},
one can also use \cite[Prop.\,16.30]{besse}.
\end{proof}
The third and second lines above show that, on a compact 
four-man\-i\-fold,
\begin{equation}\label{npw}
\mathrm{no\nh\ proper\nh\ weakly\nh\ Ein\-stein\nh\ metric\nh\ is\nh\
 a\nh\ critical\nh\ point\ of\nh\ the\nh\ functional\nh\ }\,\|\hn R\|^2\nnh.
\end{equation}
\begin{rem}\label{lycst}The line following Theorem~\ref{thrsc} is obvious:
by (\ref{nzc}), the Ric\-ci tensor $\,\ric\,$ has the spectrum 
$\,-\nnh\ly,-\nnh\ly,\ly,\ly\,$ and the norm $\,2|\hn\ly|$, the constancy
of which trivially implies (iii), as well as in\-te\-gra\-bi\-li\-ty\ of 
both Ric\-ci eigen\-dis\-tri\-bu\-tions when $\,\ly\ne0$, since they
are the kernels of the closed $\,2$-forms $\,\rho\pm\hn\nnh\ly\hh\omega$, cf.\
(\ref{smb}).
\end{rem}
\begin{lem}\label{rijij}At points of an anti-self-dual
K\"ah\-ler surface\/ $\,(M\nh,g)\,$
where the Ric\-ci and Weyl tensors are both nonzero, the weakly Ein\-stein
property is equivalent, locally, to the existence of a smooth frame\/ 
$\,e\hn_1\w,e_2\w,e_3\w,e\nh_4\w$ such that\/ {\rm(\ref{frm})}
-- {\rm(\ref{rot})} hold with some
no\-where-zero functions\/ $\,\ly\,$ and\/ $\,\sy$.
\end{lem}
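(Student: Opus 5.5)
By (\ref{asd}) the surface is scalar-flat, so Remark~\ref{equiv} reduces the weakly Ein\-stein property to the condition $\,W\nnh\nh\rho=0$. The plan is to show that, where $\,\ric\ne0\,$ and $\,W\nnh\ne0$, the condition $\,W\nnh\nh\rho=0\,$ is equivalent to the existence of a frame with (\ref{frm}) and (\ref{nzc}) for nowhere-zero $\,\ly,\sy$. Since (\ref{nzc}) is equivalent to (\ref{rot}) for orthonormal frames, as noted right after the formula for $\,W$, this proves the lemma.

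\emph{Ricci part.} In a K\"ah\-ler surface $\,\rho\,$ is $\,J$-in\-var\-i\-ant. With $\,\sca=0\,$ it is orthogonal to $\,\omega$, so it is a section of the rank-two bundle of real anti-self-dual $\,J$-anti-in\-var\-i\-ant\ldots\ more precisely, $\,\rho\,$ lies in $\,\Lambda^-$, and it is pointwise a multiple of an anti-self-dual form of type (1,1). At a point where $\,\ric\ne0$, the Ric\-ci endomorphism is $\,J$-com\-mut\-ing and trace\-less. Hence it has eigenvalues $\,-\ly,-\ly,\ly,\ly\,$ with $\,\ly\ne0\,$ and $\,J$-in\-var\-i\-ant eigenspaces of real dimension two. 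Choosing unit $\,e_1\in\ker(\ric+\ly)$ and $\,e_3\in\ker(\ric-\ly)$, and setting $\,e_2=Je_1$, $\,e_4=Je_3$, gives (\ref{frm}) and the Ric\-ci part of (\ref{nzc}). Near points with $\,\ly\ne0\,$ the eigen\-dis\-tri\-bu\-tions are smooth, so the frame is smooth up to a $\,U(1)\times U(1)$ rotation within the eigenplanes. In this frame $\,\rho=-\ly(\aj\wedge\bj-\cj\wedge\dj)=\ly\zeta$, with $\,\zeta,\eta,\theta\,$ as in (\ref{egf}).

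\emph{Weyl part.} Since $\,W^+=0$, $\,W\,$ is a trace\-less symmetric endomorphism of $\,\Lambda^-$. For K\"ah\-ler surfaces it is standard that $\,W^+\,$ alone is determined by $\,\sca$, and $\,W^-\,$ is unconstrained. The condition $\,W\nnh\nh\rho=0\,$ with $\,\ly\ne0\,$ then says exactly that $\,W\zeta=0$. So $\,W\,$ preserves $\,\zeta^\perp=\mathrm{span}(\eta,\theta)$ and is trace\-less symmetric there. Rotating $\,e_3,e_4\,$ by an angle $\,\phi$, with $\,e_1,e_2\,$ fixed, preserves (\ref{frm}) and the Ric\-ci form. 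It rotates the pair $\,(\eta,\theta)\,$ by angle $\,\pm\phi$, so the induced action on trace\-less symmetric $\,2\times2\,$ matrices is rotation by $\,\pm2\phi$. Choosing $\,\phi\,$ suitably therefore diagonalizes $\,W\,$ on $\,\mathrm{span}(\eta,\theta)\,$ with eigenvalues $\,2\sy,-2\sy$. Where $\,W\ne0\,$ we have $\,\sy\ne0$, the eigenlines are smooth, and the choice of $\,\phi\,$ is smooth up to a discrete ambiguity. Locally, then, we obtain a smooth frame.

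\emph{Translation into components and the converse.} Writing $\,W=\sy(\eta\otimes\eta-\theta\otimes\theta)/\!\cdots$ in components, using $\,|\eta|^2=2$ and the convention $\,W_{ijkl}=\langle W(e_i\wedge e_j),e_k\wedge e_l\rangle$ up to the usual factor, gives exactly $\,W_{1313}=W_{2424}=W_{1423}=\sy$, $\,W_{1414}=W_{2323}=W_{1342}=-\sy$, with all other components not algebraically related to these vanishing. This is (\ref{nzc}). Conversely, (\ref{frm})--(\ref{rot}) give (\ref{nzc}), hence $\,\rho=\ly\zeta\,$ and $\,W\zeta=0$, i.e.\ $\,W\nnh\nh\rho=0$, which is weak Ein\-stein by Remark~\ref{equiv}. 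The main obstacle is the sign and normalization bookkeeping in this step: checking that the rotation acts on $\,(\eta,\theta)\,$ as claimed, and that the eigenvalues $\,\pm2\sy\,$ yield exactly the listed component values and signs. I would verify this by evaluating $\,W(e_1,e_3,\cdot,\cdot)$ directly from $\,W=\sy(\eta\odot\eta-\theta\odot\theta)/2$.
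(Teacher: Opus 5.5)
Your proof is correct, but it follows a different route from the paper's.

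The paper never passes through the condition $W\rho=0$. It applies the final clause of \cite[Corollary 1.6]{derdzinski-park-shin-ct}, a pointwise algebraic normal form for weakly Einstein curvature tensors with $\sca=0$, which directly produces an orthonormal basis realizing (\ref{nzc}); swapping $e_1,e_2$ and $e_3,e_4$ if needed gives (\ref{frm}-ii). Smoothness is obtained by quoting \cite[Sect.\,6]{derdzinski-piccione-terek}, which identifies such frames with local sections of a principal $G$-bundle. The converse is again read off from that corollary.

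You instead use Remark~\ref{equiv} to replace the weakly Einstein property by $W\rho=0$, and then do the linear algebra by hand. $J$-invariance and tracelessness of $\ric$ give the spectrum $-\ly,-\ly,\ly,\ly$ with $J$-invariant eigenplanes, hence $\rho=\ly\zeta$. You should drop the false start about $J$-anti-invariant forms; all you need is that $\rho$ is a primitive $(1,1)$-form, hence anti-self-dual. Replacing $(e_3,e_4)$ by $(\cos\phi\,e_3+\sin\phi\,e_4,\,-\sin\phi\,e_3+\cos\phi\,e_4)$ fixes $\zeta$ and $\rho$, and turns $(\eta,\theta)$ into $(\cos\phi\,\eta+\sin\phi\,\theta,\,-\sin\phi\,\eta+\cos\phi\,\theta)$. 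So it diagonalizes the traceless symmetric restriction of $W$ to $\mathrm{span}(\eta,\theta)$.

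The normalization you left open does check out. Take the $e^i\wedge e^j$, $i<j$, orthonormal and $W_{ijkl}=\langle W(e^i\wedge e^j),e^k\wedge e^l\rangle$. Then the operator $W=\sy(\eta\otimes\eta-\theta\otimes\theta)$ satisfies $W\eta=2\sy\eta$ and $W\theta=-2\sy\theta$. It gives $W_{1313}=W_{1423}=\sy$ and $W_{1414}=W_{1342}=-\sy$, exactly as in (\ref{nzc}) and Remark~\ref{omrho}.

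Your version is more elementary and explicit. Smoothness comes directly from the eigendistributions of $\ric$ and the eigenlines of $W$, both nondegenerate since $\ly\sy\ne0$. It also makes the residual rotational freedom of Lemma~\ref{cnsym} visible. The paper's version is shorter, and rests on a general classification of weakly Einstein curvature tensors rather than on the K\"ahler-specific criterion of Remark~\ref{equiv}.
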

\begin{proof}We assume that $\,(M\nh,g)\,$ is anti-self-dual, and so
$\,\sca=0\,$ due to (\ref{asd}).

First, let $\,(M\nh,g)\,$ be weakly Ein\-stein
with $\,W\nnh\hn$ and $\,\ric\,$ both nonzero at $\,x\in M\nh$. As
$\,\sca=0$, the final clause of
\cite[Corollary 1.6]{derdzinski-park-shin-ct} allows us to choose
scalars $\,\ly,\sy\,$ and a positive
orthonormal basis $\,e\hn_1\w,e_2\w,e_3\w,e\nh_4\w$ of $\,\txm\,$ with
(\ref{nzc}), where \cite{derdzinski-park-shin-ct} uses the symbol $\,\xi\,$
for our $\,\sy$. Thus, (\ref{rot}) follows. 
Switching $\,e\hn_1\w,e_2\w$ with $\,e_2\w,e\hn_1\w$ 
and $\,e_3\w,e\nh_4\w$ with $\,e\nh_4\w,e_3\w$, if needed,
we also get (\ref{frm}-ii).
Since $\,\ly\sy\ne0$, we may choose such $\,e\hn_1\w,e_2\w,e_3\w,e\nh_4\w$
which depend smoothly on the point $\,x$, as the resulting local frames  
are well known \cite[Sect.\,6]{derdzinski-piccione-terek} to be precisely the
smooth local sections of a $\,G$-prin\-ci\-pal bundle over $\,M\nh$, for some 
matrix group $\,G\subseteq\mathrm{SO}\hh(4)$.

Conversely, as (\ref{frm}) -- (\ref{rot}) lead to (\ref{nzc}), the first part
of \cite[Corollary 1.6]{derdzinski-park-shin-ct} for $\,\xi=\sy$, combined
with its final clause, implies the weakly Ein\-stein property of $\,g$. This 
completes the proof.
\end{proof}
We can now show that (\ref{stt}) is equivalent to Conjecture (\ref{cnj})
or, in other words (see Remark~\ref{equiv}) to the claim 
immediately preceding (\ref{cnj}). Namely, {\it the negation of\/}
(\ref{stt}) yields a K\"ah\-ler-sur\-face metric $\,g\,$ realizing
(\ref{frm}) -- (\ref{rot}), with $\,\ly\sy\ne0$, so that, according to
Lemma~\ref{rijij}, $\,g\,$ is weakly Ein\-stein and, by (\ref{nzc}), also
proper. On the other hand, any $\,g\,$ having this latter property, satisfies,
again due to Lemma~\ref{rijij} and (\ref{nzc}), the negation of
(\ref{stt}).

\section{Pointwise symmetries of the system\/ {\rm(\ref{frm})} --
{\rm(\ref{rot})}}\label{ps}
\setcounter{equation}{0}
\begin{rem}\label{rewrt}For a smooth local frame
$\,e\hn_1\w,e_2\w,e_3\w,e\nh_4\w$ in a K\"ah\-ler surface and functions
$\,\ly,\sy\nh$, the conditions (\ref{frm}) -- (\ref{rot}) are equivalent to 
having (\ref{frm}) along with the following version of (\ref{rot}):
\begin{equation}\label{rwf}
\begin{array}{l}
R\hn_{1313}\w\nh=\nh R\hn_{1441}\w\nh=\,\sy,\quad
R\hn_{1221}\w=R\hn_{3434}\w=\hs\ly,\quad R\hn_{1234}\w\nh=\,0,\\
R_{ijkl}\w\nh=0\,\mathrm{\ \ whenever\ the\ set\  
}\,\{i,j,k,l\}\,\mathrm{\ has\ }\hs3\hs\mathrm{\ elements,}
\end{array}
\end{equation}
as the remaining parts of (\ref{rot}) then are immediate from the
general Riemannian and K\"ahler symmetries of the curvature tensor.
\end{rem}
\begin{lem}\label{symtr}The conditions\/ {\rm(\ref{frm})} -- {\rm(\ref{rot})}
imposed on\/ $\,g,J,e\hn_1\w,e_2\w,e_3\w,e\nh_4\w,\ly\,$ and\/ $\,\sy\,$
remain satisfied when the sextuple\/
$\,(e\hn_1\w,e_2\w,e_3\w,e\nh_4\w,\ly,\sy)\,$ is replaced by one of
\begin{equation}\label{rep}
\begin{array}{rlrl}
\mathrm{(i)}&(e_2\w,-e\hn_1\w,e_3\w,e\nh_4\w,\ly,-\nh\sy),\qquad&
\mathrm{(ii)}&(e\hn_1\w,e_2\w,e\nh_4\w,-e_3\w,\ly,-\nh\sy),\\
\mathrm{(iii)}&(e_2\w,-e\hn_1\w,e\nh_4\w,-e_3\w,\ly,\sy),\qquad&
\mathrm{(iv)}&(e_3\w,e\nh_4\w,e\hn_1\w,e_2\w,-\nnh\ly,\sy).
\end{array}
\end{equation}
\end{lem}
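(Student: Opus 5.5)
Since $\,g\,$ and $\,J\,$ are left unchanged, the plan is to check directly, for each of the four replacements in (\ref{rep}), that the new frame again satisfies (\ref{frm}) and that its curvature components are as in (\ref{rot}) with the new pair of functions. By Remark~\ref{rewrt} it is enough to verify the shorter list (\ref{rwf}) instead of all of (\ref{rot}). The remaining parts of (\ref{rot}) then follow from the Riemannian and K\"ahler symmetries of $\,R$, which hold in any frame satisfying (\ref{frm}).

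For (\ref{frm}), each new quadruple is obtained from $\,e\hn_1\w,e_2\w,e_3\w,e\nh_4\w\,$ by a signed permutation, so orthonormality (\ref{frm}-i) is immediate. For (\ref{frm}-ii), apply $\,J\,$ using the original relations:
\begin{itemize}
\item in (i), the pair $\,(e_2\w,-e\hn_1\w)\,$ has $\,Je_2\w=-e\hn_1\w\,$ and $\,J(-e\hn_1\w)=-e_2\w$, as required;
\item in (ii), the pair $\,(e\nh_4\w,-e_3\w)\,$ behaves the same way;
\item (iii) combines the two;
\item in (iv), the two $\,J$-invariant planes $\,\mathrm{span}(e\hn_1\w,e_2\w)\,$ and $\,\mathrm{span}(e_3\w,e\nh_4\w)\,$ are swapped, and each pair $\,(e_3\w,e\nh_4\w)$, $(e\hn_1\w,e_2\w)\,$ still satisfies $\,Ja=b$, $\,Jb=-a$.
\end{itemize}

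For the curvature, write $\,R'_{ijkl}\,$ for the components in the new frame. Each $\,R'_{ijkl}\,$ equals $\,\pm R_{\pi(i)\pi(j)\pi(k)\pi(l)}$, where $\,\pi\,$ is the underlying permutation and the sign is the product of the signs of the four vectors involved. Since $\,\pi\,$ is a bijection, the vanishing of components whose index set has three elements carries over at once. It remains to compute the four quantities $\,R'_{1313},\,R'_{1441},\,R'_{1221}=R'_{3434},\,R'_{1234}$.
\begin{itemize}
\item In case (i), $\,R'_{1313}=R(e_2\w,e_3\w,e_2\w,e_3\w)=R_{2323}=-\sy$ and $\,R'_{1441}=R_{2442}=-R_{2424}=-\sy$. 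Also $\,R'_{1221}=R(e_2\w,-e\hn_1\w,-e\hn_1\w,e_2\w)=R_{2112}=R_{1221}=\ly$, while $\,R'_{3434}=\ly\,$ is unchanged, and $\,R'_{1234}=-R_{2134}=R_{1234}=0$.
\item Case (ii) is symmetric to (i), with the roles of the two planes exchanged, and gives $\,-\sy,\,\ly$ in the same way.
\item Case (iii) is the composite of (i) and (ii), so it gives $\,\sy,\,\ly$.
\item In case (iv), $\,R'_{1313}=R_{3131}=\sy\,$ and $\,R'_{1441}=R_{3223}=R_{2332}=-R_{2323}=\sy$. Further, $\,R'_{1221}=R_{3443}=-R_{3434}=-\ly$, $\,R'_{3434}=R_{1212}=-\ly$, and $\,R'_{1234}=R_{3412}=R_{1234}=0$.
\end{itemize}

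The only real obstacle is sign bookkeeping: keeping track of the sign conventions relating $\,R_{1221}\,$ to $\,R_{1212}$, and of the values of components such as $\,R_{2323}$, $R_{2424}$, $R_{1342}$, which must be read off from (\ref{rot}) rather than (\ref{rwf}). I will fix the table of (\ref{rot}) first and read each $\,R'\,$ value from it. An independent check, useful against sign slips, is through (\ref{nzc}). Swapping the planes in (iv) exchanges the Ricci eigenvalues $\,-\ly,\ly$. In (i) and (ii), reversing orientation within one plane swaps the anti-self-dual forms $\,\eta,\theta\,$ of (\ref{egf}) up to sign, so the $\,W$-eigenvalues $\,\pm2\sy\,$ are exchanged, giving $\,\sy\mapsto-\sy$.
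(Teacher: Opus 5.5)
Your proposal is correct and follows essentially the paper's route. The paper replaces (\ref{rot}) by (\ref{rwf}) via Remark~\ref{rewrt}, calls (i) and (iv) obvious, obtains (ii) by conjugating (i) by (iv) (your ``roles of the two planes exchanged''), and obtains (iii) as (i) followed by (ii); your explicit component and (\ref{frm}) checks simply spell out the steps the paper leaves to the reader, and they are accurate.
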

In fact, with (\ref{rot}) replaced by (\ref{rwf}): the claims about (i) and
(iv) are obvious,
\begin{equation}\label{cng}
\begin{array}{l}
\mathrm{(ii)\ equals\ the\ result\ of\ conjugating\ (i)\ by\
(iv),}\\
\mathrm{and\hn\ (iii)\hn\ is\hn\ the\hn\ composition\hskip-3pt:\hskip-2.2pt\ 
(i)\hn\ followed\hn\ by\hn\ (ii).}
\end{array}
\end{equation}
More generally, (i) and (iv) are easily seen to
generate a thir\-ty-two-el\-e\-ment group
acting on the sextuples $\,(e\hn_1\w,e_2\w,e_3\w,e\nh_4\w,\ly,\sy)$.
In addition, (iii) is a special case of
a {\it continuous symmetry\/} of the system 
(\ref{frm}) -- (\ref{rot}):
\begin{lem}\label{cnsym}With\/ $\,\ly\,$ and $\,\sy\,$ fixed, 
{\rm(\ref{frm})} -- {\rm(\ref{rot})} are invariant under
simultaneous rotations, involving any angle function, of the pairs\/ 
$\,(e\hn_1\w,e_2\w)\,$ and\/ $\,(e_3\w,e\nh_4\w)$.
\end{lem}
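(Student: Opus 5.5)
Fix a point and an angle function $\,\phi$, and set
\[
e_1'=\cos\phi\,e_1+\sin\phi\,e_2,\quad e_2'=-\sin\phi\,e_1+\cos\phi\,e_2,\quad
e_3'=\cos\phi\,e_3+\sin\phi\,e_4,\quad e_4'=-\sin\phi\,e_3+\cos\phi\,e_4.
\]
Everything to be checked is pointwise and algebraic: smoothness of $\,\phi\,$ only guarantees that the new frame is smooth, and no derivatives of $\,\phi\,$ enter (\ref{frm}) or (\ref{rot}).

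Condition (\ref{frm}-i) holds because the transformation is orthogonal. For (\ref{frm}-ii), note that $\,J\,$ acts on $\,\mathrm{span}(e_1,e_2)\,$ and on $\,\mathrm{span}(e_3,e_4)\,$ as the same rotation by $\,-\pi/2$, since $\,Je_1=e_2$, $\,Je_2=-e_1$, and likewise for $\,e_3,e_4$. It therefore commutes with the rotation by $\,\phi\,$ in each plane, which gives $\,Je_1'=e_2'\,$ and $\,Je_3'=e_4'$.

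The main step is the curvature. I would use the description (\ref{nzc}), which is equivalent to (\ref{rot}). The Ricci tensor there equals $\,-\ly\,$ on $\,\mathrm{span}(e_1,e_2)\,$ and $\,\ly\,$ on $\,\mathrm{span}(e_3,e_4)$. Both planes are preserved by the rotation, so the Ricci components keep the same form in the new frame.

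For the Weyl tensor, I would use the anti-self-dual eigenform picture (\ref{egf}), which is stated in the introduction and is equivalent to (\ref{nzc}) together with self-dual part zero. Under the rotation:
\begin{itemize}
\item $e^1\wedge e^2\,$ and $\,e^3\wedge e^4\,$ are invariant, hence so is $\,\zeta$.
\item The pair $\,(\eta,\theta)\,$ rotates by the angle $\,2\phi$. The reason is that $\,e^1\wedge e^3-e^2\wedge e^4\,$ and $\,e^1\wedge e^4+e^2\wedge e^3\,$ are the real and imaginary parts of $\,(e^1+ie^2)\wedge(e^3+ie^4)$, which is multiplied by $\,e^{-2i\phi}$.
\end{itemize}
So it must be checked that $\,W\,$ restricted to $\,\mathrm{span}(\eta,\theta)\,$, which is $\,\mathrm{diag}(2\sy,-2\sy)\,$ in the basis $\,(\eta,\theta)$, keeps the same matrix after that basis is rotated by $\,2\phi$. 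This is the point I expect to be the obstacle. A traceless symmetric $\,2\times2\,$ matrix is invariant only under rotations by multiples of $\,\pi/2$. Hence a literal check would succeed only for $\,\phi\in(\pi/4)\mathbb{Z}$, for example the quarter turn (iii) of Lemma~\ref{symtr}, which sends $\,(\eta,\theta)\mapsto(-\eta,-\theta)$.

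I would therefore look carefully at the sign and index conventions first. Specifically, I would recompute, directly from (\ref{rot}), the action of the rotation on $\,R_{1313}'$, $\,R_{1414}'\,$ and $\,R_{1314}'$, expanding bilinearly in $\,\cos\phi,\sin\phi\,$ and using $\,R_{1324}=R_{1423}-R_{1234}$, the first Bianchi identity. The hope is that the terms in $\,\cos2\phi\,$ and $\,\sin2\phi\,$ cancel, because the curvature pairs $\,(1,3)\,$ and $\,(2,4)\,$ enter with a relative sign.

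If this cancellation fails, the correct reading of the lemma must be the opposite-orientation rotation, namely
\[
e_3'=\cos\phi\,e_3-\sin\phi\,e_4,\qquad e_4'=\sin\phi\,e_3+\cos\phi\,e_4.
\]
Then $\,(e^1+ie^2)\wedge(e^3-ie^4)\,$ picks up no phase, the forms $\,\eta,\theta\,$ themselves are invariant, and invariance of $\,W\,$ is immediate. In that case, though, compatibility with $\,J\,$ would have to be rechecked, and I would resolve this by the explicit component computation. That computation is the part of the argument whose outcome I am unsure of.
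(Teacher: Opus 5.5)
Your proposal does not prove the lemma: the decisive Weyl-tensor step is left open, and the obstruction you describe comes from a sign error. By (\ref{egf}), $\eta=-(e^1\wedge e^3+e^2\wedge e^4)$ and $\theta=e^2\wedge e^3-e^1\wedge e^4$. Thus $\eta=-\mathrm{Re}\,\Omega$ and $\theta=\mathrm{Im}\,\Omega$ for the $(1,1)$-form $\Omega=(e^1+ie^2)\wedge(e^3-ie^4)$, not for the $(2,0)$-form $(e^1+ie^2)\wedge(e^3+ie^4)$.

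The forms $e^1\wedge e^3-e^2\wedge e^4$ and $e^1\wedge e^4+e^2\wedge e^3$ that you rotate are self-dual. Together with $\omega$ they span $\Lambda^+$, which $W$ annihilates by (\ref{nzc}), so their rotation by $2\phi$ is irrelevant. Your frame change sends $e^1+ie^2$ to $e^{-i\phi}(e^1+ie^2)$ and $e^3-ie^4$ to $e^{i\phi}(e^3-ie^4)$. Hence $\Omega$, and with it $\zeta,\eta,\theta$, are literally unchanged. $W$ therefore has the same components in the new frame, and together with your Ricci observation, (\ref{nzc}) holds for the rotated frame. Once corrected, this eigenform route is a legitimate, more conceptual alternative to the paper's argument, and it also explains why the direction of the second rotation matters.

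Your fallback is wrong, not merely inconclusive. Write $c=\cos\phi$, $s=\sin\phi$. The opposite rotation $e_3'=ce_3-se_4$, $e_4'=se_3+ce_4$ is also compatible with $J$, since any rotation of either plane commutes with $J$ there, so $J$ cannot single it out. Yet it is exactly the rotation that fails whenever $\sigma\ne0$: it gives $R(e_1',e_3',e_1',e_3')=\sigma(c^4-6c^2s^2+s^4)=\sigma\cos4\phi$.

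The direct check you postponed is precisely the paper's proof, and it succeeds. Using (\ref{rwf}), the terms with three distinct indices drop out. Also $R_{1414}=R_{2323}=-\sigma$, $R_{2424}=\sigma$, and the four mixed terms $R_{1324},R_{1423},R_{2314},R_{2413}$ all equal $\sigma$ (recall $R_{1324}=-R_{1342}$). So
\[
R(ce_1+se_2,ce_3+se_4,ce_1+se_2,ce_3+se_4)=\sigma(c^4+s^4)-2c^2s^2\sigma+4c^2s^2\sigma=(c^2+s^2)^2\sigma .
\]
$R_{1414}$ is then obtained by replacing $(e_3,e_4)$ with $(e_4,-e_3)$ and invoking (\ref{rep}-ii). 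The paper treats the equalities in (\ref{rwf}) with $\lambda$ or $0$ on the right as immediate.
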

\begin{proof}Replacing (\ref{rot}) with (\ref{rwf}), we clearly get our claim
for the equalities in (\ref{rwf}) having $\,\ly\,$ or $\,0\,$ on the
right-hand side. For the rotational invariance of
$\,R\hn_{1313}\w$ and $\,R\hn_{1414}\w$, it suffices to
verify that, whenever $\,c^2\nh+s^2\nh=1$,
\begin{equation}\label{rce}
R(c\hs e\hn_1\w+s\hh e_2\w,\,c\hs e_3\w+s\hh e\nh_4\w,\,
c\hs e\hn_1\w+s\hh e_2\w,\,c\hs e_3\w+s\hh e\nh_4\w)=R\hn_{1313}\w,
\end{equation}
as the analogous equality for $\,R\hn_{1414}\w$ arises 
when one replaces $\,(e_3\w,e\nh_4\w)\,$ by $\,(e\nh_4\w,-e_3\w)$
and invokes (\ref{rep}-ii). The left-hand side of (\ref{rce}), easily
evaluated from (\ref{rwf}), equals $\,(c^2\nh+s^2)^2\nh\sy\nh$,
completing the proof.
\end{proof}
\begin{rem}\label{simrt}At any given point, the conditions
(\ref{frm}) -- (\ref{rot}) with $\,\ly\sy\ne0$ uniquely determine 
the frame 
$\,e\hn_1\w,e_2\w,e_3\w,e\nh_4\w$
up to simultaneous rotations of the pairs 
$\,(e\hn_1,e_2\w)\,$ and $\,(e_3\w,e\nh_4\w)$, as in Lemma~\ref{cnsym}, 
and the action of the thir\-ty-two-el\-e\-ment group, mentioned
in the line following (\ref{cng}). See the proof of (c) in 
\cite[the lines following formula (7.7)]{derdzinski-park-shin-ct},
applied to $\,\theta=0$, which could also be used as another proof of our 
Lemma~\ref{cnsym}.
\end{rem}

\section{The associated exterior differential equations}\label{ae}
\setcounter{equation}{0}
In a Riemannian four-man\-i\-fold $\,(M\nh,g)\,$
carrying an al\-most com\-plex structure
$\,J$ compatible with the metric $\,g$, consider a smooth local frame
$\,e\hn_1\w,e_2\w,e_3\w,e\nh_4\w$ satisfying (\ref{frm}), and a connection 
$\,\nabla\hs$ in the tangent bundle $\,T\nh M\,$ with $\,\nabla\nh g=0\,$ and
$\,\nabla\nnh J=0$, having the component
functions $\,\vg_{\hskip-2.2ptij}^k$ and connection $\,1$-forms
$\,\vg_{\hskip-2.2ptj}^k$ given by
\begin{equation}\label{cfo}
\nabla\nh e\nh_j\w\nh=\vg_{\hskip-2.2ptj}^k\nnh\nh\otimes e\hn_k\w\mathrm{\
(summed\ over\ }\,k\mathrm{),\ so\ that\ }\vg_{\hskip-2.2ptj}^k(e\hn_i\w)
=\vg_{\hskip-2.2ptij}^k\mathrm{\ and\
}\hs\nabla\hskip-2.7pt_{e\hn_i\w}\w\nnh\nh e\nh_j\w\hn
=\hn\vg_{\hskip-2.2ptij}^ke\hn_k\w.
\end{equation}
Then $\,\vg_{\hskip-2.2ptj}^k$ 
may be arranged in a matrix of the form
\begin{equation}\label{mtx}
\left[\begin{matrix}
\vg_{\hskip-2.2pt1}^1&\vg_{\hskip-2.2pt2}^1
&\vg_{\hskip-2.2pt3}^1&\vg_{\hskip-2.2pt4}^1\cr
\vg_{\hskip-2.2pt1}^2&\vg_{\hskip-2.2pt2}^2
&\vg_{\hskip-2.2pt3}^2&\vg_{\hskip-2.2pt4}^2\cr
\vg_{\hskip-2.2pt1}^3&\vg_{\hskip-2.2pt2}^3
&\vg_{\hskip-2.2pt3}^3&\vg_{\hskip-2.2pt4}^3\cr
\vg_{\hskip-2.2pt1}^4&\vg_{\hskip-2.2pt2}^4
&\vg_{\hskip-2.2pt3}^4&\vg_{\hskip-2.2pt4}^4\end{matrix}\right]\nnh\nh
=\nnh\nh
\left[\begin{matrix}
0&\ej&\fj&\gj\cr
-\ej&0&-\gj&\fj\cr
-\fj&\gj&0&\hj\cr
-\gj&-\fj&-\hj&0\end{matrix}\right]\nnh\nnh.
\end{equation}
In fact, for arbitrary
functions $\,\vg_{\hskip-2.2ptij}^k$ in (\ref{cfo}), it is clear that
$\,\nabla\nh g=0\,$ if and only if 
$\,\vg_{\hskip-2.2ptij}^k$ is skew-sym\-met\-ric in $\,j,k$, while 
$\,\nabla\nnh J=0\,$ if and only if
$\,\vg_{\hskip-2.2ptij}^k=(-\nnh1)^{j+k}\nh\vg_{\hskip-2.2ptip}^q$
whenever $\,\{\{j,p\},\nh\{k,q\}\}\nh=\hn\{\{1,2\},\nh\{3,4\}\}$.
Note that $\,\nabla\hs$ 
need not be tor\-sion-free.

In other words, $\,\nabla\hs$ is a connection 
in $\,T\nh M\,$ viewed as a complex vector bundle, in which 
$\,g\,$ is the real part of a $\,\nabla\nh$-par\-al\-lel 
Her\-mit\-i\-an fibre metric, and the right-hand side of (\ref{mtx})
represents, in real terms, a $\,1$-form valued in the Lie 
algebra $\,\mathfrak{u}(2)$.

Let 
$\,\aj,\bj,\cj,\dj$ be the local trivialization of $\,\tam\,$ 
dual to the frame $\,e\hn_1\w,e_2\w,e_3\w,e\nh_4\w$:
\begin{equation}\label{tfr}
(\aj,\bj,\cj,\dj)=(e^1\nh,e^2\nh,e^3\nh,e^4).
\end{equation}
\begin{rem}\label{omrho}With (\ref{tfr}), the relations (\ref{frm}) and 
(\ref{nzc}) easily give 
$\,g=\aj\otimes\hn\aj+\bj\otimes\hn\bj+\cj\hn\otimes\cj+\dj\otimes\nh\dj\,$
and $\,\ric=-\nnh\ly(\aj\otimes\hn\aj+\bj\otimes\hn\bj)
+\nnh\ly(\cj\hn\otimes\cj+\dj\otimes\nh\dj)$
in (\ref{smb}), while
$\,(W\nnh\nh\zeta,W\nnh\hn\eta,W\nnh\hn\theta)
=(0,\,2\sy\hn\eta,-\nnh2\sy\theta)$,
proving (\ref{krf}) and the claim following (\ref{egf}). 
\end{rem}
\begin{lem}\label{smtrs}Under the above assumptions, with\/ 
$\,\sj=\ej\hn+\nh\hj\nh$, the 
replacements\/ {\rm(i)} -- {\rm(iv)} in Lemma\/~{\rm\ref{symtr}} cause\/
$\,e\hn_1\w,e_2\w,e_3\w,e\nh_4\w,\ly,\sy,\ej\nh,\hj\nh,\sj\,$ to be
replaced by
\begin{equation}\label{rpl}
\begin{array}{rlllll}
\mathrm{i)}&e_2\w,-e\hn_1\w,e_3\w,e\nh_4\w,
&\mathrm{and}&\ly,-\sy,
&\mathrm{and}&\ej\nh,\hj,\sj,\\
\mathrm{ii)}&e\hn_1\w,e_2\w,e\nh_4\w,-e_3\w,
&\mathrm{and}&\ly,-\sy,
&\mathrm{and}&\ej\nh,\hj,\sj,\\
\mathrm{iii)}&e_2\w,-e\hn_1\w,e\nh_4\w,-e_3\w,
&\mathrm{and}&\ly,\sy,
&\mathrm{and}&\ej\nh,\hj,\sj,\\
\mathrm{iv)}&e_3\w,e\nh_4\w,e\hn_1\w,e_2\w,
&\mathrm{and}&-\nnh\ly,\sy,
&\mathrm{and}&\hj,\ej\nh,\sj,\\
\mathrm{v)}&e_3\w,e\nh_4\w,e_2\w,-e\hn_1\w,
&\mathrm{and}&-\nnh\ly,-\sy,
&\mathrm{and}&\hj,\ej\nh,\sj.
\end{array}
\end{equation}
\end{lem}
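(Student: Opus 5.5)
The plan is to compute directly how the connection forms $\,\ej\,$ and $\,\hj\,$ of (\ref{mtx}) change under each frame replacement. The transformations of the frame and of $\,\ly,\sy\,$ in (i)--(iv) are already given by Lemma~\ref{symtr}, so only the statements about $\,\ej,\hj,\sj\,$ and the new case (v) need proof.

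By (\ref{cfo}) and orthonormality (\ref{frm}-i), $\,\vg_{\hskip-2.2ptj}^k=g(\nabla e\nh_j\w,e\hn_k\w)$, with $\,\vg_{\hskip-2.2ptj}^k\,$ skew in $\,j,k$. In particular (\ref{mtx}) gives
\[
\ej=g(\nabla e_2\w,e\hn_1\w),\qquad \hj=g(\nabla e\nh_4\w,e_3\w).
\]
For a replacement frame $\,e'_j$, the new forms are therefore $\,\ej'=g(\nabla e'_2,e'_1)\,$ and $\,\hj'=g(\nabla e'_4,e'_3)$. Since each replacement is a constant signed permutation, we simply substitute.

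For (i), with $\,(e'_1,e'_2)=(e_2\w,-e\hn_1\w)$, we get $\,\ej'=g(\nabla(-e\hn_1\w),e_2\w)=-\vg_{\hskip-2.2pt1}^2=\ej$. Also $\,\hj'=\hj\,$ trivially, because $\,e_3\w,e\nh_4\w\,$ are unchanged.

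For (iv), the frame is $\,(e_3\w,e\nh_4\w,e\hn_1\w,e_2\w)$. Then $\,\ej'=g(\nabla e\nh_4\w,e_3\w)=\hj\,$ and $\,\hj'=g(\nabla e_2\w,e\hn_1\w)=\ej$.

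For (ii), rather than computing directly, I would use (\ref{cng}): (ii) is (i) conjugated by (iv). Since (iv) swaps $\,\ej\,$ and $\,\hj$, and (i) fixes both, (ii) fixes both as well. The same follows from the direct check $\,\hj'=g(\nabla(-e_3\w),e\nh_4\w)=\hj$. For (iii), which by (\ref{cng}) is (i) followed by (ii), $\,\ej\,$ and $\,\hj\,$ are again fixed.

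Case (v) is the composition of (iv) followed by (ii). Applying (ii) to $\,(e_3\w,e\nh_4\w,e\hn_1\w,e_2\w)\,$ yields $\,(e_3\w,e\nh_4\w,e_2\w,-e\hn_1\w)$. Combining the effects on $\,(\ly,\sy)$, namely $\,(\ly,\sy)\mapsto(-\ly,\sy)\mapsto(-\ly,-\sy)$, and on $\,(\ej,\hj)$, namely swapped and then fixed, gives the last line of (\ref{rpl}). This also shows that (\ref{frm})--(\ref{rot}) remain valid in case (v).

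Finally, in every case $\,\sj=\ej+\hj\,$ is unchanged, since $\,\ej,\hj\,$ are either fixed or swapped.

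The only point requiring care, and the place I expect slips, is consistent bookkeeping of signs: which index is upper in (\ref{cfo}), and the skew-symmetry used to turn $\,\vg_{\hskip-2.2pt1}^2\,$ back into $\,-\ej$. I would double-check these against (\ref{mtx}). The forms $\,\fj,\gj\,$ also change (for example by signs or by being swapped), but the lemma makes no claim about them, so they can be ignored.
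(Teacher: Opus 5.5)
Your argument is correct and is essentially the paper's: cases (i) and (iv) are read off from (\ref{mtx}), and your identities $E=g(\nabla e_2,e_1)$, $H=g(\nabla e_4,e_3)$ just make that explicit; (ii)--(iii) then follow from (\ref{cng}). The only difference is cosmetic: you obtain (v) as (iv) followed by (ii), whereas the paper uses (i) followed by (iv), and both compositions give the same frame $(e_3,e_4,e_2,-e_1)$ with $(\lambda,\sigma,E,H,S)\mapsto(-\lambda,-\sigma,H,E,S)$.
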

\begin{proof}The cases of (i) and (iv) are obvious from
Lemma~\ref{symtr} and (\ref{mtx}), while (ii) -- (iii) then arise via
(\ref{cng}), and (v) is nothing else than (i) followed by (iv).
\end{proof}
\begin{rem}\label{sweep}Also, 
$\,\aj,\bj,\cj,\dj,\ej\nh,\fj\nh,\gj,\hj,\lj=\ej\hn-\hj\nh,\sj,\ly,\sy\,$ 
may be replaced 
\begin{equation}\label{swp}
\begin{array}{rlllll}
\mathrm{i)}&\bj,-\nnh\aj,\cj,\dj,
&\mathrm{and}&\ej\nh,-\gj,\fj\nh,\hj,\lj,\sj,
&\mathrm{and}&\ly,-\sy,\\
\mathrm{ii)}&\aj,\bj,\dj,-\cj,
&\mathrm{and}&\ej\nh,\gj,-\fj\nh,\hj,\lj,\sj,
&\mathrm{and}&\ly,-\sy,\\
\mathrm{iii)}&\dj,-\cj,\aj,\bj,
&\mathrm{and}&\hj,-\gj,-\fj\nh,\ej\nh,-\lj,\sj,
&\mathrm{and}&-\nnh\ly,-\sy,\\
\mathrm{iv)}&\cj,\dj,\aj,\bj,
&\mathrm{and}&\hj,-\fj\nh,\gj,\ej\nh,-\lj,\sj,
&\mathrm{and}&-\nnh\ly,\sy.
\end{array}
\end{equation}
\end{rem}
In fact, (iii) arises as (ii) followed by (iv), while the other claims
are immediate from Lemma~\ref{smtrs} and (\ref{mtx}) -- (\ref{tfr}).
\begin{lem}\label{eqvto}
Let\/
$\,M\nh,g,J,\nabla\nnh,\aj,\bj,\cj,\dj,\ej\nh,\fj\nh,\gj,\hj,e\hn_1\w,e_2\w,
e_3\w,e\nh_4\w$ be as above.
\begin{enumerate}
\item[{\rm(a)}] $\nabla\hs$ is tor\-sion-free if and only if
$\,\aj,\bj,\cj,\dj,\ej\nh,\fj\nh,\gj,\hj\,$
satisfy the first four exterior equations of\/ 
{\rm(\ref{evp})}.
\item[{\rm(b)}] If\/ $\,\nabla\hs$ is tor\-sion-free, {\rm(\ref{rot})} 
is equivalent to the last four equations in\/ {\rm(\ref{evp})} with\/
$\,\lj=\ej\hn-\hj\,$ and\/ $\,\sj=\ej\hn+\nh\hj$.
\end{enumerate}
\end{lem}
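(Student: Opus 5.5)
The plan is to use Cartan's structure equations for the metric connection $\,\nabla\hs$ in the coframe $\,\aj,\bj,\cj,\dj\,$ of (\ref{tfr}), together with the special $\,\mathfrak{u}(2)$ shape (\ref{mtx}) of the connection matrix.

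For (a), note that (\ref{cfo}) gives $\,\nabla e^k=-\vg_{\hskip-2.2ptj}^k\otimes e^j$ for the dual coframe. The torsion $\,2$-forms are therefore $\,T^k=d\hh e^k+\vg_{\hskip-2.2ptj}^k\wedge e^j$, so $\,\nabla\hs$ is tor\-sion-free if and only if $\,d\hh e^k=\sum_j e^j\wedge\vg_{\hskip-2.2ptj}^k$ for $\,k=1,2,3,4$. I will substitute the entries of (\ref{mtx}) row by row. For $\,k=1$ this gives $\,d\aj=\bj\wedge\ej+\cj\wedge\fj+\dj\wedge\gj$. For $\,k=2$, using $\,\vg^2_1=-\ej$, $\,\vg^2_3=-\gj$ and $\,\vg^2_4=\fj$, it gives $\,d\bj=-\aj\wedge\ej+\dj\wedge\fj-\cj\wedge\gj$. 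Rows $\,k=3,4$ are handled the same way. This proves (a), and it is purely mechanical.

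For (b), I will use the curvature forms $\,\Omega^k_j=d\vg^k_j+\vg^k_l\wedge\vg^l_j$. Because $\,\nabla\hs$ preserves $\,g\,$ and $\,J$, the matrix $\,\Omega\,$ has the same $\,\mathfrak{u}(2)$ pattern as (\ref{mtx}). Hence it is determined by the four $2$-forms $\,\Omega^1_2,\Omega^1_3,\Omega^1_4,\Omega^3_4$. Computing the quadratic terms from (\ref{mtx}) gives
\[
\Omega^1_2=d\ej+2\fj\wedge\gj,\quad \Omega^3_4=d\hj-2\fj\wedge\gj,\quad \Omega^1_3=d\fj-\lj\wedge\gj,\quad \Omega^1_4=d\gj+\lj\wedge\fj,
\]
with $\,\lj=\ej-\hj$. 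It follows that $\,d\sj=\Omega^1_2+\Omega^3_4$ and $\,d\lj+4\fj\wedge\gj=\Omega^1_2-\Omega^3_4$. The left-hand sides of the last four equations of (\ref{evp}) are thus exactly $\,\Omega^1_3$, $\,\Omega^1_4$, $\,\Omega^1_2-\Omega^3_4\,$ and $\,\Omega^1_2+\Omega^3_4$, each minus the corresponding quadratic term.

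The next step is to express these forms through the curvature components. Since $\,\nabla\hs$ is now tor\-sion-free, it is the Le\-vi-Ci\-vi\-ta connection. Then $\,\Omega^k_j(e_a,e_b)=g(R(e_a,e_b)e_j,e_k)$, which is $\,\pm R_{abjk}$ with the sign fixed by the convention $\,\ric_{ij}=g^{pq}R_{ipjq}$. So $\,\Omega^k_j=\frac12\sum_{a,b}\pm R_{abjk}\,e^a\wedge e^b$. Inserting the values of (\ref{rot}), in the equivalent form (\ref{rwf}), yields the right-hand sides of (\ref{evp}). For example, the claimed $\,\Omega^1_2+\Omega^3_4=-\ly(\aj\wedge\bj-\cj\wedge\dj)\,$ should follow from $\,R_{1221}=R_{3434}=\ly$ and $\,R_{1234}=0$. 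Likewise $\,\Omega^1_3=\sy(\aj\wedge\cj-\dj\wedge\bj)\,$ should come from $\,R_{1313}=R_{1423}=\sy\,$ together with the vanishing of components whose index set has three elements.

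For the converse, the four $2$-forms $\,\Omega^1_2,\Omega^1_3,\Omega^1_4,\Omega^3_4\,$ encode every $\,R_{abjk}$, because all other components follow from the $\,\mathfrak{u}(2)$ symmetry. Equality of these forms with the prescribed right-hand sides is therefore equivalent to (\ref{rot}). This is the same observation as Remark~\ref{rewrt}.

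I expect the main obstacle to be sign bookkeeping rather than anything conceptual. This means matching the convention relating $\,\Omega\,$ to $\,R_{ijkl}$ with the stated Ricci convention. I will fix it by checking one component, for instance that $\,\ric_{11}=-\ly\,$ results from $\,R_{1212}=-\ly\,$ and $\,R_{1313}=-R_{1414}$. After that, each of the $\,6\times4\,$ coefficient comparisons is routine.
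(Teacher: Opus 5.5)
Your proposal is correct and follows essentially the same route as the paper. Part (a) is the first structure equation $de^k=e^i\wedge\varGamma_i^k$ read off row by row from (\ref{mtx}). Part (b) is the second structure equation: the paper writes $R_k^l=-d\varGamma_k^l+\varGamma_k^p\wedge\varGamma_p^l$, which is $R_k^l=-\Omega^l_k$ in your notation, and lists $R_1^2,\ R_1^3=R_2^4,\ R_3^4,\ R_2^3=-R_1^4$ from (\ref{rot}).

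One small caveat concerns the sign. The check you propose, $r_{11}=-\lambda$, holds under either sign convention for $R$, so it cannot settle the sign. What settles it is that $R_{abab}$ is the sectional curvature, or equivalently the paper's formula for $R_{ijk}{}^{l}$. This gives $\Omega^k_j(e_a,e_b)=-R_{abjk}$, which is exactly the sign your stated conclusions, such as $\Omega^1_2=-\lambda A\wedge B$, require.
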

\begin{proof}By (\ref{cij}), $\,\nabla\hs$ is tor\-sion-free if and only if
$\,2\hh d\hh e^k\nh
=[\vg_{\hskip-2.2ptji}^k\nh-\vg_{\hskip-2.2ptij}^k]\hh e^i\nnh\wedge e^j$ 
which, due to skew-sym\-me\-try of $\,e^i\nnh\wedge e^j$ in $\,i,j$, is the
same as $\,d\hh e^k\nh=\vg_{\hskip-2.2ptji}^ke^i\nnh\wedge e^j\nh$, that is,
$\,d\hh e^k\nh=e^i\nnh\wedge\vg_{\hskip-2.2pti}^k\nnh$, since (\ref{cfo})
gives $\,\vg_{\hskip-2.2ptji}^ke^j\nh=\vg_{\hskip-2.2pti}^k\nnh$. Now 
(\ref{mtx}) and (\ref{tfr}) yield (a).

Next, let $\,\nabla\hs$ be tor\-sion-free. 
The components $\,R\hn_{ijk}\w{}^{\hs l}$ of the $\,(1,3)\,$
curvature tensor $\,R\,$ of $\,\nabla\nh$, with 
$\,R(e\hn_i\w,e\nh_j\w)\hh e\hn_k=R\hn_{ijk}\w{}^{\hs l}e\hn_l\w$, are given by
$\,R\hn_{ijk}\w{}^{\hs l}=d\nh_j\w\vg_{\hskip-2.2ptik}^{\hs l}
-d_i\w\vg_{\hskip-2.2ptjk}^{\hs l}
+\vg_{\hskip-2.2ptjp}^{\hs l}\vg_{\hskip-2.2ptik}^p
-\vg_{\hskip-2.2ptip}^{\hs l}\vg_{\hskip-2.2ptjk}^p
+[\vg_{\hskip-2.2ptij}^p-\vg_{\hskip-2.2ptji}^p]
\vg_{\hskip-2.2ptpk}^{\hs l}$, where $\,d_i\w$ denotes the
$\,e\hn_i\w$-di\-rec\-tion\-al derivative. This amounts to 
$\,R_{\nnh k}^{\hs l}=-d\vg_{\hskip-2.2ptk}^{\hs l}
+\vg_{\hskip-2.2ptk}^p\wedge\vg_{\hskip-2.2ptp}^{\hs l}$ for the $\,2$-forms 
$\,R_{\nnh k}^{\hs l}$, skew-sym\-met\-ric in $\,k,l$, characterized by 
$\,R_{\nnh k}^{\hs l}(e\hn_i\w,e\nh_j\w)=R\hn_{ijk}\w{}^{\hs l}$. As
(\ref{rot}) reads
\[
\begin{array}{l}
R_1^2=-\ly\aj\wedge\bj,\quad 
R_1^{\hs3}=R_2^{\hs4}=\sy(\aj\wedge\cj-\dj\wedge\bj),\\
R_3^{\hs4}=\ly\hh\cj\wedge\dj,\quad
R_2^{\hs3}=-R_1^{\hs4}=\sy(\aj\wedge\nh\dj-\bj\wedge\cj),
\end{array}
\]
the assertion (b) follows from (\ref{mtx}).
\end{proof}
\begin{lem}\label{lfgeq}Given functions\/ $\,\ly,\sy\,$ and 
$\,1$-forms\/ $\,\aj,\bj,\cj,\dj,\fj\nh,\gj,\lj,\sj,$ on a four-man\-i\-fold
satisfying\/ {\rm(\ref{evp})}, let us 
express\/ $\,\lj,\fj,\gj,\hh d\ly,\hh d\sy\,$ as functional combinations 
of\/ {\rm(\ref{tfr})}, 
with some coefficients\/  
$\,\lj_i\w,\fj\hskip-2.3pt_i\w,\gj\hn_i\w,\ly_i\w,\sy\nnh_i\w$, $\,i=1,2,3,4$. 
Then
\begin{equation}\label{sol}
\begin{array}{rl}
\mathrm{a)}&16\ly\sy^2\hn(\lj_1\w,\hs\lj_2\w,\hs\lj_3\w,\hs\lj_4\w)
=8\ly\sy(-\sy\nnh_2\w,\,\sy\nnh_1\w,\,\sy\hskip-1.5pt_4\w,\,-\sy\nnh_3\w)\\
&\phantom{16\ly\sy^2\hn(\lj_1\w,\hs\lj_2\w,\hs\lj_3\w,\hs\lj_4\w)}
\hs+\mu_*\w(\ly_2\w,\,-\nnn\ly_1\w,\,-\nnn\ly_4\w,\,\ly_3\w),\\
\mathrm{b)}&8\ly\sy(\fe,\hs\fz,\hs\gj\nh_3\w,
\hs\gj\nnh_4\w)=-\mu_-\w(\ly_3\w,\,\ly_4\w,\,\ly_2\w,\,-\nnn\ly_1\w),\\
\mathrm{c)}&8\ly\sy(\fd,\hs\fv,\hs\gj\nh_1\w,
\hs\gj\nh_2\w)=\mu_+\w(\ly_1\w,\,\ly_2\w,\hs-\nnn\ly_4\w,\,\ly_3\w),
\end{array}
\end{equation}
where\/ $\,\mu_\pm\w=2\sy\pm\ly\,$ and\/
$\,\mu_*\w=\mu_+\w\mu_-\w=4\sy^2\nh-\ly^2\nh$.
\end{lem}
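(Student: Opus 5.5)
Apply $d$ to the last four equations of (\ref{evp}) and use $d^2=0$. This produces 3-form identities that are linear in the unknown coefficients $\lj_i,\fj\hskip-2.3pt_i,\gj_i$, with coefficients depending on $\ly,\sy,\ly_i,\sy_i$. Equation (\ref{sol}) should simply be the solution of this linear system. Here $\ly\sy\ne0$ is implicitly needed wherever we divide; the identities themselves hold without that assumption, since they are written in cleared-denominator form.

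The steps are as follows. Take $d$ of $d\sj=-\ly(\aj\wedge\bj-\cj\wedge\dj)$. This gives $0=-d\ly\wedge(\aj\wedge\bj-\cj\wedge\dj)-\ly\,d(\aj\wedge\bj-\cj\wedge\dj)$. Expand $d(\aj\wedge\bj)=d\aj\wedge\bj-\aj\wedge d\bj$ and $d(\cj\wedge\dj)$ using the first four equations of (\ref{evp}). The $\ej$ terms cancel, because $\bj\wedge\ej\wedge\bj=0$, and the $\hj$ terms cancel similarly. What remains are terms in $\fj,\gj$ wedged with pairs of $\aj,\bj,\cj,\dj$. Writing $\fj=\sum\fj\hskip-2.3pt_i e^i$ and $\gj=\sum\gj_ie^i$ and collecting the four basis 3-forms gives four scalar equations of the form $\ly\cdot(\text{linear in }\fj\hskip-2.3pt_i,\gj_i)=(\text{linear in }\ly_i)$.

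Next do the same for $d\lj=-4\fj\wedge\gj-\ly\,\omega$. Since $\omega=\aj\wedge\bj+\cj\wedge\dj$ is closed (Kähler), this gives $d\ly\wedge\omega=-4\,d(\fj\wedge\gj)=-4(d\fj\wedge\gj-\fj\wedge d\gj)$. Substituting the expressions for $d\fj$ and $d\gj$, the $\lj\wedge\gj\wedge\gj$ and $\lj\wedge\fj\wedge\fj$ terms vanish, leaving $\sy$ times terms linear in $\fj,\gj$ wedged with $\eta,\theta$. These are four more scalar equations. Finally, $d$ of the $d\fj$ and $d\gj$ equations yields $d\lj\wedge\gj-\lj\wedge d\gj+d\sy\wedge\eta'+\sy\,d\eta'=0$, where $\eta'=\aj\wedge\cj-\dj\wedge\bj$. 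In this equation, $d\lj$ contains $-4\fj\wedge\gj-\ly\omega$, the term $\fj\wedge\gj\wedge\gj$ drops out, and $d\eta'$ is computed via the first four equations. The terms involving $\ej,\hj$ should recombine into $\lj$, through $\ej-\hj=\lj$, and cancel against $\lj\wedge d\gj$. The $\sj$ contributions must cancel too, which is consistent with $\sj$ being absent from (\ref{sol}). The result is a set of equations linear in $\lj_i,\fj\hskip-2.3pt_i,\gj_i$, with $\ly,\sy,\sy_i$ coefficients. The $\omega\wedge\gj$ and $\omega\wedge\fj$ terms bring in factors of $\ly$, which is plausibly the source of $\mu_\pm=2\sy\pm\ly$.

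Then solve. The first batch (from $d\sj$), together with the $\omega$-type batch (from $d\lj$), should decouple $(\fj\hskip-2.3pt_1,\fj\hskip-2.3pt_2,\gj_3,\gj_4)$ from $(\fj\hskip-2.3pt_3,\fj\hskip-2.3pt_4,\gj_1,\gj_2)$. Each block is a $4\times4$ system whose combination yields (b) and (c), with coefficients $8\ly\sy$ and $\mp\mu_\mp$. Substituting these into the $d^2\fj=0$ and $d^2\gj=0$ equations isolates $\lj_i$. The resulting products $\mu_+\mu_-$ then give $\mu_*$ and the factor $16\ly\sy^2$ in (a). To reduce the bookkeeping, I would use the symmetries of Remark~\ref{sweep}. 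For example, (\ref{swp}-iv) swaps $\ly\to-\ly$, $\mu_+\leftrightarrow-\mu_-$, and exchanges the index pairs $(1,2)\leftrightarrow(3,4)$. So (c) follows from (b) by that symmetry, and it suffices to derive (b) and half of (a) directly.

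The main obstacle is the sign-heavy expansion of $d^2\fj=0$ and $d^2\gj=0$, in particular verifying that the $\ej,\hj,\sj$ terms cancel exactly. This I would check componentwise on the six basis 2-forms $e^i\wedge e^j$ tensored with 1-forms, organizing the terms by the anti-self-dual basis (\ref{egf}) so that the cancellations become visible.
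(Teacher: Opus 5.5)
Your strategy is the paper's. There, $d$ is applied to the equation for $dF$ in (\ref{evp}) and to the expressions for $d(S-L)$ and $d(S+L)$, which is equivalent to your $d^2S=d^2L=0$. The resulting three $3$-form identities are read off as twelve linear equations in $L_i,F_i,G_i$ and solved. Your extra equation $d^2G=0$ is a linear consequence of those twelve and adds nothing.

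However, one of your predicted intermediate outcomes is wrong in a way that matters: in $d^2F=0$ the $L$-terms do not cancel, they double. On one side, $-L\wedge dG=\sigma L\wedge(A\wedge D-B\wedge C)$. On the other, the first four equations of (\ref{evp}) give $d(A\wedge C-D\wedge B)=L\wedge(A\wedge D-B\wedge C)-2G\wedge(A\wedge B-C\wedge D)$. You are right that $E,H$ enter only through $E-H=L$ and that no $S$ appears. Altogether,
\[
d\sigma\wedge(A\wedge C-D\wedge B)=\mu_+G\wedge A\wedge B-\mu_-G\wedge C\wedge D-2\sigma L\wedge(A\wedge D-B\wedge C).
\]
If the $L$-terms cancelled as you say, this identity would contain no $L_i$ and (a) would be out of reach, which contradicts your own later step of "isolating $L_i$". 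It is exactly this coefficient $2\sigma$, multiplied by $8\lambda\sigma$ and combined with (b), that yields (a). For example, with $8\lambda\sigma G_3=-\mu_-\lambda_2$ one gets $16\lambda\sigma^2L_1=-8\lambda\sigma\sigma_2+\mu_*\lambda_2$.

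Second, the $F_i,G_i$ system does not split into the two $4\times4$ blocks you name. It splits into four $2\times2$ blocks, pairing $F_1$ with $G_2$, $F_2$ with $G_1$, $F_3$ with $G_4$, and $F_4$ with $G_3$. So each block contains one unknown from (b) and one from (c). For instance, your $d^2S$ and $d^2L$ batches give $2\lambda(G_2-F_1)=\lambda_3$ and $4\sigma(G_2+F_1)=\lambda_3$. Hence $8\lambda\sigma F_1=-\mu_-\lambda_3$ and $8\lambda\sigma G_2=\mu_+\lambda_3$ at once, with no division needed. This sum-and-difference form is in fact slightly more direct than the paper's inversion of the matrix with rows $(\mu_+,\mu_-)$ and $(\mu_-,\mu_+)$, which has determinant $8\lambda\sigma$. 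But it also means (b) and (c) come out together, so your symmetry reduction is unnecessary. Note too that (\ref{swp}-iv) sends $\mu_\pm$ to $\mu_\mp$, not to $-\mu_\mp$.
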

\begin{proof}For the coefficient functions 
$\,\lj_i\w,\fj\hskip-2.3pt_i\w,\gj\hn_i\w,\ly_i\w,\sy\nnh_i\w$, one has
\begin{equation}\label{nel}
\begin{array}{rlrl}
\mathrm{i)}&2\sy\nh\lj_1\w+\mu_+\w\gj\nh_3\w=-\sy\nnh_2\w,&\mathrm{ii)}&
2\sy\nh\lj_2\w+\mu_+\w\gj\nnh_4\w=\sy\nnh_1\w,\\
\mathrm{iii)}&2\sy\nh\lj_3\w-\mu_-\w\gj\nh_1\w=\sy\hskip-1.5pt_4\w,&\mathrm{iv)}&
2\sy\nh\lj_4\w-\mu_-\w\gj\nh_2\w=-\sy\nnh_3\w,\\
\mathrm{v)}&\mu_+\w\fe+\mu_-\w\gj\nh_2\w=0,&\mathrm{vi)}&
\mu_+\w\fz-\mu_-\w\gj\nh_1\w=0,\\
\mathrm{vii)}&\mu_+\w\fd-\mu_-\w\gj\nnh_4\w=\ly_1\w,&\mathrm{viii)}&
\mu_+\w\fv+\mu_-\w\gj\nh_3\w=\ly_2\w,\\
\mathrm{ix)}&\mu_-\w\fe+\mu_+\w\gj\nh_2\w=\ly_3\w,&\mathrm{x)}&
\mu_-\w\fz-\mu_+\w\gj\nh_1\w=\ly_4\w,\\
\mathrm{xi)}&\mu_-\w\fd-\mu_+\w\gj\nnh_4\w=0,&\mathrm{xii)}&
\mu_-\w\fv+\mu_+\w\gj\nh_3\w=0.
\end{array}
\end{equation}
In fact, applying $\,d\,$ to the formula for $\,d\fj\hs$ in (\ref{evp}), 
as well as to 
$\,d(\sj-\lj)=4\fj\nh\wedge\gj+2\ly\hh\cj\wedge\dj\,$ and 
$\,d(\sj+\lj)=-4\fj\nh\wedge\gj-2\ly\aj\wedge\bj$, 
the dif\-fer\-ence and sum of the last two equations in 
(\ref{evp}), we obtain, respectively, from (\ref{evp}),
\[
\begin{array}{l}
d\sy\hn\wedge(\aj\wedge\cj\nh-\nh\dj\wedge\bj)\hn
=\hn\mu_+\w\gj\wedge\aj\wedge\bj\nh-\nh\mu_-\w\gj\wedge\cj\wedge\hn\dj
\nh-\nh2\sy\lj\wedge(\aj\wedge\nh\dj\nh-\nh\bj\wedge\cj),\\
d\ly\wedge\cj\wedge\hn\dj=-\mu_-\w\gj\wedge(\aj\wedge\cj-\dj\wedge\bj)
-\mu_+\w\fj\nh\wedge(\aj\wedge\nh\dj-\bj\wedge\cj),\\
d\ly\wedge\aj\wedge\bj\hs=\hs-\mu_-\w\fj\nh\wedge(\aj\wedge\nh\dj-\bj\wedge\cj)
-\mu_+\w\gj\wedge(\aj\wedge\cj-\dj\wedge\bj).
\end{array}
\]
These equalities amount to (\ref{nel}). 
As equations (\ref{nel}-v) -- (\ref{nel}-xii) read
\[
\begin{array}{l}
\left[\begin{matrix}
\mu_+\w&\mu_-\w\cr
\mu_-\w&\mu_+\w\end{matrix}\right]\nnh\nnh
\left[\begin{matrix}
\fd&\fv&\gj\nh_2\w&-\gj\nh_1\w\cr
-\gj\nnh_4\w&\gj\nh_3\w&\fe&\fz
\end{matrix}\right]\nnh
=\nnh
{\left[\begin{matrix}
\ly_1\w&\ly_2\w&\ly_3\w&\ly_4\w\cr
0&0&0&0\end{matrix}\right]}_{\phantom{j_j}}\hskip-8pt,\\
\hskip17pt\mathrm{while\ }
\left[\begin{matrix}
\mu_+\w\!&\!-\mu_-\w\cr
-\mu_-\w\!&\!\mu_+\w\end{matrix}\right]\nnh\nnh\left[\begin{matrix}
\mu_+\w&\mu_-\w\cr
\mu_-\w&\mu_+\w\end{matrix}\right]\nnh
=\nnh\left[\begin{matrix}
8\ly\sy&0\cr
0&8\ly\sy\end{matrix}\right]\nnh\nnh,
\end{array}
\]
(\ref{sol}-b) and (\ref{sol}-c) follow. Now (\ref{nel}-i) -- 
(\ref{nel}-iv) yield (\ref{sol}-a).
\end{proof}
Due to (\ref{cfo}) we have
$\,\nabla\hn\xi^k\nh=-\vg_{\hskip-2.2ptj}^k\nnh\nh\otimes\xi^j\nh$, that is,
\[
\begin{array}{ll}
\nabla\nnh\aj=-\nh\ej\otimes\bj-\fj\otimes\cj-\gj\otimes\dj,
&\nabla\nh\bj=\ej\otimes\aj+\gj\otimes\cj-\fj\otimes\dj,\\
\nabla\cj=\fj\otimes\aj-\gj\otimes\bj-\hj\otimes\dj,
&\nabla\nh\dj=\gj\otimes\aj+\fj\otimes\bj+\hj\otimes\cj,
\end{array}
\]
where only the first equality needs to be verified, using (\ref{mtx}) --
(\ref{tfr}), as (\ref{swp}) then yields the other three. 
Consequently, for $\,\zeta,\eta,\theta\,$ in (\ref{egf}),
\begin{equation}\label{lmn}
\nabla\zeta=2\hh\gj\nh\otimes\eta-2\fj\nnh\otimes\theta,\,\,\,
\nabla\eta=-\nh2\hh\gj\nh\otimes\zeta+\lj\otimes\theta,\,\,\,
\nabla\theta=2\fj\nnh\otimes\zeta-\lj\otimes\eta.
\end{equation}
\begin{rem}\label{lacst}Conjecture {\rm(\ref{cnj})} is true under the
additional assumption that $\,\ly$ is constant. Namely, one then easily
verifies the italicized statement following (\ref{evp}): having
(\ref{evp}) with $\,d\ly=0\,$ at points where
$\,\ly\sy\ne0\,$ would, by (\ref{sol}-b) --
(\ref{sol}-c), give $\,\fj\nh=\gj=0$, and hence $\,\sy=0$, due to the
equations involving $\,d\fj\hs$ and $\,d\hh\gj\,$ in (\ref{evp}).
\end{rem}

\section{The thir\-ty-six di\-rec\-tion\-al-de\-riv\-a\-tive
equations}\label{ts}
\setcounter{equation}{0}
Assuming (\ref{frm}) -- (\ref{rot})
with $\,\nabla\nnh J=0$, denoting  by $\,\sj\nh_i\w,\ly_i\w,\sy\nnh_i\w$ 
the component functions of $\,\sj,\,d\ly,\,d\sy\,$ relative to 
the local frame $\,e\hn_1\w,e_2\w,e_3\w,e\nh_4\w$, and by
$\,\sj\nh_{ij}\w,\ly\hh_{ij}\w,\sy\nnh_{ij}\w$ their
$\,e\nh_j\w\nh$-di\-rec\-tion\-al derivatives, at points
where $\,\ly\sy\ne0\,$ one has
\begin{enumerate}
\item[{\rm($a$)}] $2\hh[2(\ly_{12}\w-\ly_{21}\w)+\ly_1\w\sj\hn_1\w
+\ly_2\w\sj\hn_2\w]\hh\sy=\ly_1\w\sy\nnh_2\w-\ly_2\w\sy\nnh_1\w$,
\item[{\rm($a_4$)}] $2\hh[2(\ly_{34}\w-\ly_{43}\w)+\ly_3\w\sj\hn_3\w
+\ly_4\w\sj\nnh_4\w]\hh\sy=\ly_3\w\sy\hskip-1.5pt_4\w-\ly_4\w\sy\nnh_3\w$,
\item[{\rm($b$)}] $2\hh[2(\ly_{13}\w-\ly_{31}\w)
+\ly_2\w\sj\hn_3\w-\ly_4\w\sj\hn_1\w]\hh\sy=-\ly_1\w\ly_3\w
-\ly_2\w\sy\hskip-1.5pt_4\w
+(\sy\nnh_2\w-\ly_2\w)\ly_4\w$,
\item[{\rm($b_1$)}] $2\hh[2(\ly_{23}\w-\ly_{32}\w)-\ly_1\w\sj\hn_3\w
-\ly_4\w\sj\hn_2\w]\hh\sy
=\ly_2\w\ly_3\w-(\sy\nnh_1\w+\ly_1\w)\ly_4\w+\ly_1\w\sy\hskip-1.5pt_4\w$,
\item[{\rm($b_2$)}] $2\hh[2(\ly_{14}\w-\ly_{41}\w)+\ly_2\w\sj\nnh_4\w
+\ly_3\w\sj\hn_1\w]\hh\sy=-\ly_2\w\ly_3\w+\ly_2\w\sy\nnh_3\w
-\ly_3\w\sy\nnh_2\w+\ly_1\w\ly_4\w$,
\item[{\rm($b_3$)}] $2\hh[2(\ly_{24}\w-\ly_{42}\w)+\ly_3\w\sj\hn_2\w
-\ly_1\w\sj\nnh_4\w]\hh\sy=-\ly_1\w\ly_3\w-\ly_2\w\ly_4\w-\ly_1\w\sy\nnh_3\w
+\ly_3\w\sy\nnh_1\w$,
\item[{\rm($c$)}] $16\hh[2(\ly_{32}\w-\ly_{41}\w)+\ly_3\w\sj\hn_1\w
+\ly_4\w\sj\hn_2\w]\ly\sy^2\nh\mu_-\w
=8\ly\sy\nh(2\sy+7\ly)(\ly_4\w\sy\nnh_1\w-\ly_3\w\sy\nnh_2\w)$
\item[] \quad$+(3\ly^3\nh+2\ly^2\nh\sy-28\ly\sy^2\nh
-104\sy^3)(\ly_1\w\ly_4\w-\ly_2\w\ly_3\w)$,
\item[{\rm($c_1$)}] $16\hh[2(\ly_{31}\w+\ly_{42}\w)+\ly_4\w\sj\hn_1\w
-\ly_3\w\sj\hn_2\w]\ly\sy^2\nh\mu_+\w
=8\ly\sy\nh(7\ly-2\sy)(\ly_3\w\sy\nnh_1\w+\ly_4\w\sy\nnh_2\w)$
\item[] \quad$+(3\ly^3\nh-2\ly^2\nh\sy-28\ly\sy^2\nh
+104\sy^3)(\ly_1\w\ly_3\w+\ly_2\w\ly_4\w)$,
\item[{\rm($c_4$)}] $16\hh[2(\ly_{23}\w-\ly_{14}\w)-\ly_1\w\sj\hn_3\w
-\ly_2\w\sj\nnh_4\w]\ly\sy^2\nh\mu_+\w
=8\ly\sy\nh(2\sy-7\ly)(\ly_1\w\sy\hskip-1.5pt_4\w-\ly_2\w\sy\nnh_3\w)$
\item[] \quad$+(3\ly^3\nh-2\ly^2\nh\sy-28\ly\sy^2\nh
+104\sy^3)(\ly_2\w\ly_3\w-\ly_1\w\ly_4\w)$,
\item[{\rm($c_5$)}] $16\hh[2(\ly_{13}\w+\ly_{24}\w)
+\ly_2\w\sj\hn_3\w-\ly_1\w\sj\nnh_4\w]\ly\sy^2\nh\mu_-\w
=-8\ly\sy\nh(2\sy+7\ly)(\ly_1\w\sy\nnh_3\w+\ly_2\w\sy\hskip-1.5pt_4\w)$
\item[] \quad$-(3\ly^3\nh+2\ly^2\nh\sy-28\ly\sy^2\nh
-104\sy^3)(\ly_1\w\ly_3\w+\ly_2\w\ly_4\w)$,
\item[{\rm($d$)}] $16\hh[2(\mu_+\w\ly_{11}\w+\mu_-\w\ly_{33}\w)
+\mu_+\w\ly_2\w\sj\hn_1\w+\mu_-\w\ly_4\w\sj\hn_3\w]\ly\sy^2\nh
=-3\mu_*\w(\mu_-\w\ly_2^2+\mu_+\w\ly_4^2)$
\item[] \quad$+8\ly\sy\hh[(2\sy-3\ly)\ly_2\w\sy\nnh_2\w
+(2\sy+3\ly)\ly_4\w\sy\hskip-1.5pt_4\w+4\ly(\ly_1\w\sy\nnh_1\w
-\ly_3\w\sy\nnh_3\w)+32\ly\sy^3]$
\item[] \quad$+4\sy\hh[(\ly^2\nh+4\ly\sy+20\hh\sy^2)\ly_1^2
+(\ly^2\nh-4\ly\sy+20\hh\sy^2)\ly_3^2]$,
\item[{\rm($d_1$)}] $16\hh[2(\mu_-\w\ly_{22}\w+\mu_+\w\ly_{33}\w)
-\mu_-\w\ly_1\w\sj\hn_2\w+\mu_+\w\ly_4\w\sj\hn_3\w]\ly\sy^2\nh
=-3\mu_*\w(\mu_+\w\ly_1^2+\mu_-\w\ly_4^2)$
\item[] \quad$+8\ly\sy\hh[(2\sy+3\ly)\ly_1\w\sy\nnh_1\w
+(2\sy-3\ly)\ly_4\w\sy\hskip-1.5pt_4\w+4\ly(\ly_3\w\sy\nnh_3\w
-\ly_2\w\sy\nnh_2\w)-32\ly\sy^3]$
\item[] \quad$+4\sy\hh[(\ly^2\nh-4\ly\sy+20\hh\sy^2)\ly_2^2
+(\ly^2\nh+4\ly\sy+20\hh\sy^2)\ly_3^2]$,
\item[{\rm($d_2$)}] $16\hh[2(\mu_-\w\ly_{11}\w+\mu_+\w\ly_{44}\w)
+\mu_-\w\ly_2\w\sj\hn_1\w-\mu_+\w\ly_3\w\sj\nnh_4\w]\ly\sy^2\nh
=-3\mu_*\w(\mu_+\w\ly_2^2+\mu_-\w\ly_3^2)$
\item[] \quad$+8\ly\sy\hh[(2\sy+3\ly)\ly_2\w\sy\nnh_2\w
+(2\sy-3\ly)\ly_3\w\sy\nnh_3\w-4\ly(\ly_1\w\sy\nnh_1\w
+\ly_4\w\sy\hskip-1.5pt_4\w)-32\ly\sy^3]$
\item[] \quad$+4\sy\hh[(\ly^2\nh-4\ly\sy+20\hh\sy^2)\ly_1^2
+(\ly^2\nh+4\ly\sy+20\hh\sy^2)\ly_4^2]$,
\item[{\rm($d_3$)}] $16\hh[2(\mu_+\w\ly_{22}\w+\mu_-\w\ly_{44}\w)
-\mu_+\w\ly_1\w\sj\hn_2\w-\mu_-\w\ly_3\w\sj\nnh_4\w]\ly\sy^2\nh
=-3\mu_*\w(\mu_-\w\ly_1^2+\mu_+\w\ly_3^2)$
\item[] \quad$+8\ly\sy\nh[(2\sy-3\ly)\ly_1\w\sy\nnh_1\w
+(2\sy+3\ly)\ly_3\w\sy\nnh_3\w+4\ly(\ly_2\w\sy\nnh_2\w
-\ly_4\w\sy\hskip-1.5pt_4\w)+32\ly\sy^3]$
\item[] \quad$+4\sy\nh[(\ly^2\nh+4\ly\sy+20\hh\sy^2)\ly_2^2
+(\ly^2\nh-4\ly\sy+20\hh\sy^2)\ly_4^2]$,
\item[{\rm($e$)}] $16\hh[2(\mu_+\w\ly_{21}\w+\mu_-\w\ly_{34}\w)
-\mu_+\w\ly_1\w\sj\hn_1\w+\mu_-\w\ly_4\w\sj\nnh_4\w]\ly\sy^2\nh
=32\ly^2\nh\sy(\ly_2\w\sy\nnh_1\w-\ly_3\w\sy\hskip-1.5pt_4\w)$
\item[] \quad$+(3\ly^3\nh-2\ly^2\nh\sy+4\ly\sy^2\nh+104\sy^3)\ly_1\w\ly_2\w
-(3\ly^3\nh+2\ly^2\nh\sy+4\ly\sy^2\nh-104\sy^3)\ly_3\w\ly_4\w$
\item[] \quad$+8\ly\sy\nh[(3\ly-2\sy)\ly_1\w\sy\nnh_2\w
-(3\ly+2\sy)\ly_4\w\sy\nnh_3\w]$,
\item[{\rm($e_1$)}] $16\hh[2(\mu_-\w\ly_{12}\w-\mu_+\w\ly_{34}\w)
+\mu_-\w\ly_2\w\sj\hn_2\w-\mu_+\w\ly_4\w\sj\nnh_4\w]\ly\sy^2\nh
=-32\ly^2\nh\sy(\ly_1\w\sy\nnh_2\w+\ly_3\w\sy\hskip-1.5pt_4\w)$
\item[] \quad$-(3\ly^3\nh+2\ly^2\nh\sy+4\ly\sy^2\nh-104\sy^3)\ly_1\w\ly_2\w
-(3\ly^3\nh-2\ly^2\nh\sy+4\ly\sy^2\nh+104\sy^3)\ly_3\w\ly_4\w$
\item[] \quad$+8\ly\sy\nh[(2\sy-3\ly)\ly_4\w\sy\nnh_3\w
-(2\sy+3\ly)\ly_2\w\sy\nnh_1\w]$,
\item[{\rm($e_2$)}] $16\hh[2(\mu_-\w\ly_{21}\w-\mu_+\w\ly_{43}\w)
-\mu_-\w\ly_1\w\sj\hn_1\w+\mu_+\w\ly_3\w\sj\hn_3\w]\ly\sy^2\nh
=-32\ly^2\nh\sy(\ly_2\w\sy\nnh_1\w+\ly_4\w\sy\nnh_3\w)$
\item[] \quad$-(3\ly^3\nh+2\ly^2\nh\sy+4\ly\sy^2\nh-104\sy^3)\ly_1\w\ly_2\w
-(3\ly^3\nh-2\ly^2\nh\sy+4\ly\sy^2\nh+104\sy^3)\ly_3\w\ly_4\w$
\item[] \quad$+8\ly\sy\nh[(2\sy-3\ly)\ly_3\w\sy\hskip-1.5pt_4\w
-(2\sy+3\ly)\ly_1\w\sy\nnh_2\w]$,
\item[{\rm($e_3$)}]$16\hh[2(\mu_+\w\ly_{12}\w+\mu_-\w\ly_{43}\w)
+\mu_+\w\ly_2\w\sj\hn_2\w-\mu_-\w\ly_3\w\sj\nh_3\w]\ly\sy^2\nh
=32\ly^2\nh\sy(\ly_1\w\sy\nnh_2\w-\ly_4\w\sy\nnh_3\w)$
\item[] \quad$+(3\ly^3\nh-2\ly^2\nh\sy+4\ly\sy^2\nh+104\sy^3)\ly_1\w\ly_2\w
-(3\ly^3\nh+2\ly^2\nh\sy+4\ly\sy^2\nh-104\sy^3)\ly_3\w\ly_4\w$
\item[] \quad$+8\ly\sy\nh[(3\ly-2\sy)\ly_2\w\sy\nnh_1\w
-(3\ly+2\sy)\ly_3\w\sy\hskip-1.5pt_4\w]$,
\item[{\rm($f$)}] $16\hh[2(\sy\nnh_{12}\w-\sy\nnh_{21}\w)+\sy\nnh_1\w\sj\hn_1\w
+\sy\nnh_2\w\sj\hn_2\w]\ly\sy^2\nh
=\mu_*\w(\ly_1\w\sy\nnh_2\w-\ly_2\w\sy\nnh_1\w)
+16\hh\sy^2\nh(\ly_4\w\sy\nnh_3\w-\ly_3\w\sy\hskip-1.5pt_4\w)$,
\item[{\rm($f\nnh_4$)}] $16\hh[2(\sy\hskip-2pt_{43}\w-\sy\nnh_{34}\w)
-\sy\nnh_3\w\sj\hn_3\w-\sy\hskip-1.5pt_4\w\sj\nnh_4\w]\ly\sy^2\nh
=16\hh\sy^2\nh(\ly_1\w\sy\nnh_2\w-\ly_2\w\sy\nnh_1\w)
+\mu_*\w(\ly_4\w\sy\nnh_3\w-\ly_3\w\sy\hskip-1.5pt_4\w)$,
\item[{\rm($g$)}] $16\hh[2(\sy\nnh_{13}\w-\sy\nnh_{31}\w)+\sy\nnh_2\w\sj\hn_3\w
-\sy\hskip-1.5pt_4\w\sj\hn_1\w]\ly\sy^2$
\item[] \quad$=4\sy(\mu_-\w\ly_3\w\sy\nnh_1\w-\mu_+\w\ly_1\w\sy\nnh_3\w)
+\mu_-^2\ly_2\w\sy\hskip-1.5pt_4\w-\mu_+^2\ly_4\w\sy\nnh_2\w$,
\item[{\rm($g_1$)}] $16\hh[2(\sy\nnh_{32}\w-\sy\nnh_{23}\w)
+\sy\nnh_1\w\sj\hn_3\w+\sy\hskip-1.5pt_4\w\sj\hn_2\w]\ly\sy^2$
\item[] \quad$=\mu_+^2\ly_1\w\sy\hskip-1.5pt_4\w-\mu_-^2\ly_4\w\sy\nnh_1\w
  +4\sy(\mu_-\w\ly_2\w\sy\nnh_3\w
-\mu_+\w\ly_3\w\sy\nnh_2\w)$,
\item[{\rm($g_2$)}] $16\hh[2(\sy\hskip-2pt_{41}\w-\sy\nnh_{14}\w)
-\sy\nnh_2\w\sj\nnh_4\w-\sy\nnh_3\w\sj\hn_1\w]\ly\sy^2$
\item[] \quad$=\mu_+^2\ly_2\w\sy\nnh_3\w-\mu_-^2\ly_3\w\sy\nnh_2\w
+4\sy\nh(\mu_-\w\ly_1\w\sy\hskip-1.5pt_4\w-\mu_+\w\ly_4\w\sy\nnh_1\w)$,
\item[{\rm($g_3$)}] $16\hh[2(\sy\hskip-2pt_{42}\w-\sy\nnh_{24}\w)
+\sy\nnh_1\w\sj\nnh_4\w-\sy\nnh_3\w\sj\hn_2\w]\ly\sy^2$
\item[] \quad$=\mu_+^2\ly_3\w\sy\nnh_1\w-\mu_-^2\ly_1\w\sy\nnh_3\w
+4\sy\nh(\mu_+\w\ly_2\w\sy\hskip-1.5pt_4\w-\mu_-\w\ly_4\w\sy\nnh_2\w)$,
\item[{\rm($h$)}] $16\hh[2(\ly_{11}\w+\ly_{22}\w)
+\ly_2\w\sj\hn_1\w-\ly_1\w\sj\hn_2\w]\ly\sy^2\nh\mu_*\w$
\item[] \quad$-128\hh[2(\sy\nnh_{11}\w+\sy\nnh_{22}\w)
+\sy\nnh_2\w\sj\hn_1\w-\sy\nnh_1\w\sj\hn_2\w]\ly^2\nh\sy^3\nh
+320\ly^2\nnh\sy^2\nh(\sy_1^2+\sy_2^2)$
\item[] \quad$=(112\sy^4\nh+40\ly^2\nnh\sy^2\nh-\ly^4)(\ly_1^2+\ly_2^2)
-48\sy^2\nnh\mu_*\w(\ly_3^2+\ly_4^2)$
\item[] \quad$+16\ly\sy[(4\sy^2\nh-5\ly^2)(\ly_1\w\sy\nnh_1\w
+\ly_2\w\sy\nnh_2\w)
+8\sy^2\nh(\ly_3\w\sy\nnh_3\w+\ly_4\w\sy\hskip-1.5pt_4\w)+32\ly^2\nnh\sy^3]$,
\item[{\rm($h_4$)}] $16\hh[2(\ly_{33}\w+\ly_{44}\w)+\ly_4\w\sj\hn_3\w
-\ly_3\w\sj\hn_4\w]\ly\sy^2\nh\mu_*\w$
\item[] \quad$-128\hh[2(\sy\nnh_{33}\w+\sy\hskip-2pt_{44}\w)
+\sy\hskip-1.5pt_4\w\sj\hn_3\w-\sy\nnh_3\w\sj\hn_4\w]\ly^2\nh\sy^3\nh
+320\ly^2\nnh\sy^2\nh(\sy_3^2+\sy\hskip-1.5pt_4^2)$
\item[] \quad$=(112\sy^4\nh+40\ly^2\nnh\sy^2\nh-\ly^4)(\ly_3^2+\ly_4^2)
-48\sy^2\nnh\mu_*\w(\ly_1^2+\ly_2^2)$
\item[] \quad$+16\ly\sy[(4\sy^2\nh-5\ly^2)(\ly_3\w\sy\nnh_3\w
+\ly_4\w\sy\hskip-1.5pt_4\w)
+8\sy^2\nh(\ly_1\w\sy\nnh_1\w+\ly_2\w\sy\nnh_2\w)-32\ly^2\nnh\sy^3]$,
\item[{\rm($i$)}] $16\hh[2(\ly_{41}\w+\ly_{23}\w)
-\ly_1\w\sj\hn_3\w-\ly_3\w\sj\hn_1\w]\ly\sy^2\nh\mu_*\w$
\item[] \quad$-128\hh[2(\sy\hskip-2pt_{41}\w+\sy\nnh_{23}\w)
-\sy\nnh_1\w\sj\hn_3\w-\sy\nnh_3\w\sj\hn_1\w]\ly^2\nh\sy^3\nh$
\item[] \quad$=(304\sy^4\nh-160\ly\sy^3\nh+56\ly^2\nnh\sy^2\nh+8\ly^3\nnh\sy
-\ly^4)\ly_2\w\ly_3\w$
\item[] \quad$+\,\hs(304\sy^4\nh+160\ly\sy^3\nh+56\ly^2\nnh\sy^2\nh
-8\ly^3\nnh\sy-\ly^4)\ly_1\w\ly_4\w$
\item[] \quad$-8\ly\sy[\mu_-^2\ly_3\w\sy\nnh_2\w
+\mu_+^2\ly_1\w\sy\hskip-1.5pt_4\w
+40\ly\sy(\sy\nnh_1\w\sy\hskip-1.5pt_4\w+\sy\nnh_2\w\sy\nnh_3\w)]$
\item[] \quad$-8\ly\sy[(4\sy^2\nh-4\ly\sy+9\ly^2)\ly_2\w\sy\nnh_3\w
+(4\sy^2\nh+4\ly\sy+9\ly^2)\ly_4\w\sy\nnh_1\w]$,
\item[{\rm($i_1$)}] $16\hh[2(\ly_{42}\w-\ly_{13}\w)
-\ly_2\w\sj\hn_3\w-\ly_3\w\sj\hn_2\w]\ly\sy^2\nh\mu_*\w$
\item[] \quad$-128\hh[2(\sy\hskip-2pt_{42}\w-\sy\nnh_{13}\w)
-\sy\nnh_2\w\sj\hn_3\w-\sy\nnh_3\w\sj\hn_2\w]\ly^2\nh\sy^3\nh$
\item[] \quad$=(304\sy^4\nh-160\ly\sy^3\nh+56\ly^2\nnh\sy^2\nh+8\ly^3\nnh\sy
-\ly^4)\ly_2\w\ly_4\w$
\item[] \quad$-\,\hs(304\sy^4\nh+160\ly\sy^3\nh+56\ly^2\nnh\sy^2\nh
-8\ly^3\nnh\sy-\ly^4)\ly_1\w\ly_3\w$
\item[] \quad$+8\ly\sy[\mu_+^2\ly_3\w\sy\nnh_1\w
-\mu_-^2\ly_2\w\sy\hskip-1.5pt_4\w
+40\ly\sy(\sy\nnh_1\w\sy\nnh_3\w-\sy\nnh_2\w\sy\hskip-1.5pt_4\w)]$
\item[] \quad$+8\ly\sy[(4\sy^2\nh+4\ly\sy+9\ly^2)\ly_1\w\sy\nnh_3\w
-(4\sy^2\nh-4\ly\sy+9\ly^2)\ly_4\w\sy\nnh_2\w]$,
\item[{\rm($i_2$)}] $16\hh[2(\ly_{24}\w-\ly_{31}\w)
-\ly_1\w\sj\nnh_4\w-\ly_4\w\sj\hn_1\w]\ly\sy^2\nh\mu_*\w$
\item[] \quad$-128\hh[2(\sy\nnh_{24}\w-\sy\nnh_{31}\w)
-\sy\nnh_1\w\sj\nnh_4\w-\sy\hskip-1.5pt_4\w\sj\hn_1\w]\ly^2\nh\sy^3\nh$
\item[] \quad$=(304\sy^4\nh+160\ly\sy^3\nh+56\ly^2\nnh\sy^2\nh-8\ly^3\nnh\sy
-\ly^4)\ly_2\w\ly_4\w$
\item[] \quad$-\,\hs(304\sy^4\nh-160\ly\sy^3\nh+56\ly^2\nnh\sy^2\nh
+8\ly^3\nnh\sy-\ly^4)\ly_1\w\ly_3\w$
\item[] \quad$+8\ly\sy[\mu_-^2\ly_1\w\sy\nnh_3\w
-\mu_+^2\ly_4\w\sy\nnh_2\w
+40\ly\sy(\sy\nnh_1\w\sy\nnh_3\w-\sy\nnh_2\w\sy\hskip-1.5pt_4\w)]$
\item[] \quad$+8\ly\sy[(4\sy^2\nh-4\ly\sy+9\ly^2)\ly_3\w\sy\nnh_1\w
-(4\sy^2\nh+4\ly\sy+9\ly^2)\ly_2\w\sy\hskip-1.5pt_4\w]$,
\item[{\rm($i_3$)}] $16\hh[2(\ly_{14}\w+\ly_{32}\w)
+\ly_2\w\sj\nnh_4\w+\ly_4\w\sj\hn_2\w]\ly\sy^2\nh\mu_*\w$
\item[] \quad$-128\hh[2(\sy\nnh_{14}\w+\sy\nnh_{32}\w)
+\sy\nnh_2\w\sj\nnh_4\w+\sy\hskip-1.5pt_4\w\sj\hn_2\w]\ly^2\nh\sy^3\nh$
\item[] \quad$=(304\sy^4\nh-160\ly\sy^3\nh+56\ly^2\nnh\sy^2\nh+8\ly^3\nnh\sy
-\ly^4)\ly_1\w\ly_4\w$
\item[] \quad$+\,\hs(304\sy^4\nh+160\ly\sy^3\nh+56\ly^2\nnh\sy^2\nh
-8\ly^3\nnh\sy-\ly^4)\ly_2\w\ly_3\w$
\item[] \quad$-8\ly\sy[\mu_-^2\ly_4\w\sy\nnh_1\w
+\mu_+^2\ly_2\w\sy\nnh_3\w
+40\ly\sy(\sy\nnh_2\w\sy\nnh_3\w+\sy\nnh_1\w\sy\hskip-1.5pt_4\w)]$
\item[] \quad$-8\ly\sy[(4\sy^2\nh-4\ly\sy+9\ly^2)\ly_1\w\sy\hskip-1.5pt_4\w
+(4\sy^2\nh+4\ly\sy+9\ly^2)\ly_3\w\sy\nnh_2\w]$,
\item[{\rm($j$)}] $16\hh[2(\sj\hn_{21}\w-\sj\hn_{12}\w)-\sj_1^2
-\sj_2^2]\ly\sy^2\nh+8\ly\sy\nh(\sy\nnh_2\w\sj\hn_1\w
-\sy\nnh_1\w\sj\hn_2\w)$
\item[] \quad$=\mu_*\w(\ly_2\w\sj\hn_1\w-\ly_1\w\sj\hn_2\w)
+16\hh\sy^2\nh(\ly_3\w\sj\nnh_4\w-\ly_4\w\sj\hn_3\w)-32\ly^2\nnh\sy^2\nh$,
\item[{\rm($j_4$)}] $16\hh[2(\sj\nh_{43}\w-\sj\hn_{34}\w)-\sj_3^2
-\sj_4^2]\ly\sy^2\nh+8\ly\sy\nh(\sy\hskip-1.5pt_4\w\sj\hn_3\w
-\sy\nnh_3\w\sj\nnh_4\w)$
\item[] \quad$=\mu_*\w(\ly_4\w\sj\hn_3\w-\ly_3\w\sj\nnh_4\w)
+16\hh\sy^2\nh(\ly_1\w\sj\hn_2\w-\ly_2\w\sj\hn_1\w)
+32\ly^2\nnh\sy^2\nh$,
\item[{\rm($k$)}] $16\hh[2(\sj\hn_{31}\w-\sj\hn_{13}\w)
+\sj\hn_1\w\sj\nnh_4\w-\sj\hn_2\w\sj\hn_3\w]\ly\sy^2\nh
+8\ly\sy\nh(\sy\nnh_2\w\sj\nnh_4\w-\sy\hskip-1.5pt_4\w\sj\hn_2\w)$
\item[] \quad$=\mu_+^2\ly_4\w\sj\hn_2\w-\mu_-^2\ly_2\w\sj\nnh_4\w
+4\sy\nh(\mu_+\w\ly_1\w\sj\hn_3\w-\mu_-\w\ly_3\w\sj\hn_1\w)$,
\item[{\rm($k_1$)}] $16\hh[2(\sj\hn_{32}\w-\sj\hn_{23}\w)
+\sj\hn_2\w\sj\nnh_4\w+\sj\hn_1\w\sj\hn_3\w]\ly\sy^2\nh
+8\ly\sy\nh(\sy\hskip-1.5pt_4\w\sj\hn_1\w-\sy\nnh_1\w\sj\nnh_4\w)$
\item[] \quad$=\mu_+^2\ly_1\w\sj\nnh_4\w-\mu_-^2\ly_4\w\sj\hn_1\w
+4\sy\nh(\mu_-\w\ly_2\w\sj\hn_3\w-\mu_+\w\ly_3\w\sj\hn_2\w)$,
\item[{\rm($k_2$)}] $16\hh[2(\sj\nnh_{41}\w-\sj\hn_{14}\w)
-\sj\hn_1\w\sj\hn_3\w-\sj\hn_2\w\sj\nnh_4\w]\ly\sy^2\nh
+8\ly\sy\nh(\sy\nnh_3\w\sj\hn_2\w-\sy\nnh_2\w\sj\hn_3\w)$
\item[] \quad$=\mu_+^2\ly_2\w\sj\hn_3\w-\mu_-^2\ly_3\w\sj\hn_2\w
+4\sy\nh(\mu_-\w\ly_1\w\sj\nnh_4\w-\mu_+\w\ly_4\w\sj\hn_1\w)$,
\item[{\rm($k_3$)}] $16\hh[2(\sj\nnh_{42}\w-\sj\hn_{24}\w)
-\sj\hn_2\w\sj\hn_3\w+\sj\hn_1\w\sj\nnh_4\w]\ly\sy^2\nh
+8\ly\sy\nh(\sy\nnh_1\w\sj\hn_3\w-\sy\nnh_3\w\sj\hn_1\w)$
\item[] \quad$=\mu_+^2\ly_3\w\sj\hn_1\w-\mu_-^2\ly_1\w\sj\hn_3\w
+4\sy\nh(\mu_+\w\ly_2\w\sj\nnh_4\w-\mu_-\w\ly_4\w\sj\hn_2\w)$.
\end{enumerate}
In fact, as $\,(2\ej\nh,\,2\hj)=(\sj+\lj,\,\sj-\lj)$, 
(\ref{mtx}) yields the Lie-brack\-et relations
\begin{equation}\label{lbk}
\begin{array}{l}
2[\hh e\hn_1\w,e_2\w]=(\sj\hn_1\w+\lj_1\w)\hh e\hn_1\w
+(\sj\hn_2\w+\lj_2\w)\hh e_2\w+2(\gj\nh_1\w+\fz)\hh e_3\w
+2(\gj\nh_2\w-\fe)\hh e\nh_4\w\hh,\\
2[\hh e\hn_1\w,e_3\w]=2\fe e\hn_1\w
+(\sj\hn_3\w+\lj_3\w-2\hh\gj\hn_1\w)\hh e_2\w
+2\fd e_3\w+(2\gj\nh_3\w+\lj_1\w-\sj\hn_1\w)\hh e\nh_4\w\hh.
\end{array}
\end{equation}
Taking the directional derivatives of $\,\ly\,$ and $\,\sy\,$ along the
vector fields (\ref{lbk}) multiplied by $\,16\ly\sy^2$, and using
(\ref{sol}), which, in particular, gives
\begin{equation}\label{inp}
8\ly\sy(\gj\nh_1\w\nh+\fz)=-4\sy\nnh\ly_4\w,\qquad
8\ly\sy(\gj\nh_2\w-\fe)=4\sy\nnh\ly_3\w,
\end{equation}
we obtain ($a$), ($b$), ($f$) and ($g$). Next, given a $\,1$-form
$\,Z=Z_i\w e^i\nh$, let $\,Z_{ij}\w=d_j\w Z_i\w$. By (\ref{tfr}) and
(\ref{evp}), $\,d\hskip.2ptZ\,$ is the combination of 
$\,\aj\wedge\bj,\,\aj\wedge\cj,\,\aj\wedge\nh\dj,\,\bj\wedge\cj,\,
\bj\wedge\hn\dj,\,\cj\wedge\nh\dj$ with the first three respective
coefficient functions
\[
\begin{array}{l}
Z_{21}\w-Z_{12}\w-Z_1\w\ej\hn_1\w-Z_2\w\ej\hn_2\w-Z_3\w(\fz+\gj\nh_1\w)
+Z_4\w(\fe-\gj\nh_2\w),\\
Z_{31}\w-Z_{13}\w-Z_1\w\fe+Z_2\w(\gj\nh_1\w-\ej\hn_3\w)
-Z_3\w\fd+Z_4\w(\hj\nnh_1\w-\gj\nh_3\w),\\
Z_{41}\w-Z_{14}\w-Z_1\w\gj\nh_1\w-Z_2\w(\fe+\ej\hn_4\w)
-Z_3\w(\fv+\hj\nnh_1\w)-Z_4\w\gj\nnh_4\w.
\end{array}
\]
In view of (\ref{evp}), the above three lines equal
\[
\begin{array}{l}
\lj_1\w\gj\nh_2\w-\lj_2\w\gj\nh_1\w,\qquad
\lj_1\w\gj\nh_3\w-\lj_3\w\gj\nh_1\w,\qquad
\lj_1\w\gj\nh_4\w-\lj_4\w\gj\nnh_1\w\,\mathrm{\ \ when\ }\,Z=\fj\nh, \\
4(\fz\gj\nh_1\w-\fe\gj\nh_2\w)-\ly,\,\,\,
4(\fd\gj\nh_1\w-\fe\gj\nh_3\w),\,\,\,
4(\fv\gj\nh_1\w-\fe\gj\nnh_4\w)\,\mathrm{\,\ if\ }\,Z=\hs\lj.
\end{array}
\]
For $\,Z=S\,$ the first two of them are $\,-\nnh\ly\,$ and $\,0$. Of the
eight equalities just described, multiplying the first three by
$\,256\ly^2\nh\sy^3\nh$, the next two by $\,512\ly^2\nnh\sy^4\nh$,
the last two by $\,32\ly\sy^2\nh$, then using (\ref{sol}), 
while replacing $\,\ej\nh$ by $\,(\sj+\lj)/2\,$ and $\,\hj$ by
$\,(\sj-\lj)/2$, we get  
($c$), ($d$), ($e$), ($h$), ($i$), ($j$), ($k$). The equalities labeled by
plain letters $\,a,b,\dots,j,k\,$ have now been established. They
imply the remaining ones, with subscripts $\,1,2,3,4,5\,$ in their labels,
each subscript referring to one of the five cases of (\ref{rpl}). Thus, 
($e_2$) arises from ($e$) via (\ref{rpl}-ii), and ($h_4$) from ($h$) via
(\ref{rpl}-iv).
\begin{rem}\label{cmbin}One easily verifies that equation ($e_3$), or ($e_1$),
or ($i_1$), or ($i_3$), is the linear combination of ($a$), ($a_4$), ($e$),
or ($a$), ($a_4$), ($e_2$), or ($c_1$), ($c_5$), ($g$), ($g_3$), ($i_2$), or 
($c$), ($c_4$), ($g_2$), ($g_1$), ($i$), with the respective coefficient
functions 
$\,8\ly\sy\nh\mu_+\w,\,-8\ly\sy\nh\mu_-\w,\,1$, or
$\,8\ly\sy\nh\mu_-\w,\,-8\ly\sy\nh\mu_+\w,\,1$, or
$\,\mu_-\w,\,-\mu_+\w,\,8\ly\sy,\,-8\ly\sy,1\,$ or, finally, 
$\,\mu_+\w,\,-\mu_-\w,\,8\ly\sy,\,-8\ly\sy,1$. On the other hand, 
the linear combination of ($b_2$), ($b_1$), ($c$), ($c_4$) with the
coefficients 
$\,8\ly\sy\nh\mu_*\w,\,-8\ly\sy\nh\mu_*\w,\,-\mu_+\w,\,\mu_-\w$ yields
$\,32\ly\sy\,$ times equation (a) in the Introduction, while (b) then 
follows from (a) via (\ref{rep}-ii). Equivalently, (b) multiplied by
$\,32\ly\sy\,$ also arises as the combination of ($b$), ($b_3$), ($c_1$),
($c_5$) with the coefficients 
$\,8\ly\sy\nh\mu_*\w,\,8\ly\sy\nh\mu_*\w,\,\mu_-\w,\,-\mu_+\w$.
\end{rem}
\begin{rem}\label{intgr}Under the assumptions (\ref{frm}) -- (\ref{rot}), at
points where $\,\ly\sy\ne0$, due to (\ref{nzc}), the Ric\-ci
eigen\-dis\-tri\-bu\-tions are $\,\mathrm{span}\hh(e\hn_1\w,e_2\w)\,$ and 
$\,\mathrm{span}\hh(e_3\w,e\nh_4\w)$. The former (or, the latter) 
is in\-te\-gra\-ble if and only if $\,\ly_3\w=\ly_4\w=\hs0\,$ (or,
respectively, $\,\ly_1\w=\ly_2\w=\hs0$). In fact, the first claim is obvious
from the first line of (\ref{lbk}) and (\ref{inp}), and it implies the
second one as a consequence of (\ref{rep}-iv).
\end{rem}

\section{Generic points}\label{gp}
\setcounter{equation}{0}
To prove an equality-type conclusion ($*$) on a manifold $\,M\nh$, one
can obviously fix a tensor field $\,\varTheta\nh$, with the zero set
$\,Y\nnh\nh$, and establish ($*$) just on the open dense set
$\,Y\nh^o\nnh\cup[M\nnh\smallsetminus Y]$ of
$\,\varTheta\nh${\it-ge\-ner\-ic points}, arising as the union of 
the interior $\,Y\nh^o$ of $\,Y\nnh$ and the complement
$\,M\nnh\smallsetminus Y\nnh$ of $\,Y\nnh\nh$.

Our proofs of Conjecture (\ref{cnj}) in the special cases (ii) -- (iii) of
Theorem~\ref{thrsc} will consist in showing that, under either of the
assumptions (ii) -- (iii),
\begin{equation}\label{lsz}
\ly\sy=0\,\mathrm{\ identically\ whenever\
(\ref{frm})\,}-\mathrm{\,(\ref{rot})\ \ hold\ and\ }\,\nabla\nnh J=0,
\end{equation}
which is a slightly modified version of (\ref{stt}).

In fact, suppose that we have established the equality $\,\ly\sy=0\,$ in
(\ref{lsz}). At $\,\sy\nh$-ge\-ner\-ic points, with $\,Y\nnh$ as above for
$\,\varTheta\nh=\sy$, the metric restricted to $\,Y\nh^o\nnh$, or
$\,M\nnh\smallsetminus Y\nnh$, is, due to (\ref{nzc}),
con\-for\-mal\-ly flat or, respectively,
Ric\-ci-flat, and so  -- see (\ref{cff}) -- on both 
sets the Ric\-ci tensor
is parallel. This is the assertion of (\ref{cnj}). It also shows that
either $\,\ly=0\,$ identically, or $\,\ly\ne0=\sy\,$ everywhere, as required
in (\ref{stt}).

In both cases (ii), (iii), we will derive a contradiction from the
assumption that
\begin{equation}\label{wwa}
\mathrm{(\ref{frm})\,}-\mathrm{\,(\ref{rot})\ \ with\ }\,\nabla\nnh J
=0\,\mathrm{\ and\ }\,\ly\sy\ne0\,\mathrm{\ everywhere.}
\end{equation}
In other words, we will consider the zero set $\,Y\nnh$ of
$\,\ly\sy\,$ and the $\,\ly\sy\nh$-ge\-ner\-ic set 
$\,Y\nh^o\nnh\cup[M\nnh\smallsetminus Y]$. On $\,Y\nh^o$ our claim,
$\,\ly\sy=0\,$ in (\ref{lsz}), holds trivially, and the contradiction
resulting from (\ref{wwa}), where we have replaced $\,M\,$ by
$\,M\nnh\smallsetminus Y\nnh$, amounts to 
showing that $\,M\nnh\smallsetminus Y\hs$ is empty.

In addition to (\ref{wwa}), we may -- and will -- assume that
\begin{equation}\label{lfz}
\ly_4\w\,=\,\hs0\quad\mathrm{and}\quad\ly_3\w>\hh0\quad\mathrm{everywhere,}
\end{equation}
which results in no loss of generality:
with $\,Y\nnh$ being this time the zero set of $\,d\ly$, Remark~\ref{lacst}
yields our assertion, $\,\ly\sy=0$, on $\,Y\nh^o\nnh$, while in 
$\,M\nnh\smallsetminus Y\nnh$, locally, (\ref{rep}-iv) and Lemma~\ref{cnsym}
allow us, respectively, to require that $\,\nabla\nnh\ly\,$ have a nonzero 
projection onto $\,\mathrm{span}\hh(e_3\w,e\nh_4\w)$, and that this
projection be equal to $\,\ly_3\w e_3\w$ with $\,\ly_3\w>\hh0$.
\begin{rem}\label{summr}To summarize: in the next two sections we will prove
assertion of Theorem~\ref{thrsc}, in the form $\,\ly\sy=0\,$ -- cf.\
(\ref{lsz}) -- assuming, in addition, (ii) or (iii). In the latter case,
we will consider the $\,d\ly$-ge\-ner\-ic set  
$\,Y\nh^o\nnh\cup[M\nnh\smallsetminus Y]$, and observe that
$\,\ly\sy=0\,$ on $\,Y\nh^o\nnh$, so that we are free to replace $\,M\,$ by 
$\,M\nnh\smallsetminus Y\nnh$. In both cases, we will then show that the
conditions (\ref{wwa}) -- (\ref{lfz}) lead to a contradiction.
\end{rem}

\section{Proof of case {\rm(ii)} in Theorem~\ref{thrsc}}\label{pc}
\setcounter{equation}{0}
Following Remark~\ref{summr}, we now assume (\ref{wwa}) -- (\ref{lfz}), along
with (ii) in Theorem~\ref{thrsc}, in order to derive a contradiction.

In view of Remark~\ref{intgr} and (\ref{lfz}), 
$\,\mathrm{span}\hh(e\hn_1\w,e_2\w)\,$ cannot be 
in\-te\-gra\-ble. Since we are assuming (ii),
this leads to in\-te\-gra\-bi\-li\-ty\ of
$\,\mathrm{span}\hh(e_3\w,e\nh_4\w)$, with $\,\ly_1\w=\ly_2\w=\hs0$.

As we now have $\,\ly_1\w=\ly_2\w=\ly_4\w=\hs0$, 
($b$), ($b_1$), ($b_2$) and ($b_3$) give, successively,
\begin{equation}\label{suc}
\ly_{31}\w=0,\quad\ly_{32}\w=0,\quad2\sy\nh\sj\hn_1\w=-\sy\nnh_2\w,\quad
2\sy\nh\sj\hn_2\w=\sy\nnh_1\w,
\end{equation}
turning ($c$) and ($c_1$), with $\,\ly\sy\nnh\ly_3\w\ne0$, into
$\,\sy\nnh_1\w=\sy\nnh_2\w=0$, and so, by (\ref{suc}), 
$\,\sj\hn_1\w=\sj\hn_2\w=0$. From 
($j$) and ($h$)
we thus get $\,\ly_3\w\sj\nnh_4\w=2\ly^2$ and
$\,3\mu_*\w\ly_3^2=8\ly\sy\nh(\ly_3\w\sy\nnh_3\w+4\ly^2\nnh\sy)$. 
Combined with ($d_2$), or with ($d_3$), this yields
\begin{equation}\label{lts}
\ly_3\w\sy\nnh_3\w=-4\sy\nnh\mu_*\w\mathrm{,\ so\ that\ }\,3\mu_*\w\ly_3^2
=64\ly\sy^2\nh(\ly^2\nh-2\sy^2).
\end{equation}
Next, ($d$) and ($d_1$), divided by $\,4\sy\nnh$, amount to
\begin{equation}\label{els}
\begin{array}{rl}
\mathrm{i)}&8\ly\sy\nnh\mu_-\w\ly_{33}\w
=8\ly^2\nh(8\sy^3\nh-\ly_3\w\sy\nnh_3\w)
+(\ly^2\nh-4\ly\sy+20\hh\sy^2)\ly_3^2,\\
\mathrm{ii)}&8\ly\sy\nnh\mu_+\w\ly_{33}\w
=8\ly^2\nh(\ly_3\w\sy\nnh_3\w-8\sy^3)+(\ly^2\nh+4\ly\sy+20\hh\sy^2)\ly_3^2.
\end{array}
\end{equation}
The combination of (\ref{els}-i) and (\ref{els}-ii) with the coefficients
$\,3\mu_*\w\mu_+\w$ and $\,-3\mu_*\w\mu_-\w$, evaluated via 
(\ref{lts}), then divided by $\,512\ly^2\nh\sy^2\nh$, reads
$\,0=\ly^4\nh-5\ly^2\nnh\sy^2\nh+12\sy^4\nh$. This is a 
contradiction, the right-hand side being clearly 
positive unless $\,\ly=\sy=\hs0$.

\section{Case {\rm(iii)}}\label{ct}
\setcounter{equation}{0}
As stated in Remark~\ref{summr}, we first consider the $\,d\ly$-ge\-ner\-ic
set $\,Y\nh^o\nnh\cup[M\nnh\smallsetminus Y]$, and note that,
due to Remark~\ref{lacst}, our assertion, $\,\ly\sy=0$, holds on
$\,Y\nh^o\nnh$. We may thus replace $\,M\,$ with 
$\,M\nnh\smallsetminus Y\nnh$, and assume (\ref{wwa}) -- (\ref{lfz}), along 
with (iii) in Theorem~\ref{thrsc}, in order to derive a contradiction.
In view of (\ref{wwa}) -- (\ref{lfz}),
\begin{equation}\label{nbl}
\ly\sy\nh(\ly_1^2+\ly_2^2+\ly_3^2)\ne0\,\,\mathrm{\ everywhere.}
\end{equation}
By (iii) and (\ref{nzc}), 
$\,\sy\,$ is, locally, a function of $\,\ly$, with the derivative
$\,\sy\hh'\nh=d\hh\sy\hn/d\ly$.

Next, according to case (ii) of
Theorem~\ref{thrsc}, already established in Sect.\,\ref{pc},
combined with Remark~\ref{intgr}, on any
nonempty open set, one cannot simultaneously have 
$\,\ly_1\w=\ly_2\w=\hs0\,$ and the condition $\,\ly\sy\ne0\,$ in (\ref{wwa}). 
Thus, if $\,Y\nh^o\nnh\cup[M\nnh\smallsetminus Y]\,$ now denotes 
the $\,(\ly_1^2+\ly_2^2)$-ge\-ner\-ic set, then $\,Y\nh^o$ must be empty. 
As $\,\ly_1\w$ and $\,\ly_2\w$ cannot both vanish on
$\,M\nnh\smallsetminus Y\nnh$ (now replacing $\,M$), 
(a) -- (b) in the Introduction yield
\begin{equation}\label{fsq}
\begin{array}{l}
\mathrm{i)}\hskip7pt8\ly\sy\nh\sy\hh'\nh=12\sy^2\nh-\ly^2\nh,\qquad
\mathrm{ii)}\hskip7pt64\ly^2\nnh\sy^3\nnh\sy\hh''\nh
=(4\sy^2\nh-\ly^2)(12\sy^2\nh+\ly^2),
\end{array}
\end{equation}
while $\,\sy\nnh_i\w=\sy\hh'\nnh\ly_i\w$ and
$\,\sy\nnh_{ij}\w=\sy\hh'\nnh\ly\hh_{ij}\w+\sy\hh''\nnh\ly_i\w\ly_j\w$. 
Hence, replacing
\[
\begin{array}{l}
8\ly\sy\nh\sy\nnh_i\w\mathrm{\ with\ }\,(12\sy^2\nh-\ly^2)\ly_i\w,\qquad
8\ly\sy\nh\sy\nnh_i\w-\mu_*\w\ly_i\w\mathrm{\ with\ }\,8\sy^2\nnh\ly_i\w,\\
8\ly\sy^2\nh(8\ly\sy\nh\sy\nnh_{ij}\w
-\mu_*\w\ly\hh_{ij}\w)\mathrm{\ with\ }\,64\ly\sy^4\nnh\ly\hh_{ij}\w
+(4\sy^2\nh-\ly^2)(12\sy^2\nh+\ly^2)\ly_i\w\ly_j\w,
\end{array}
\]
as allowed by (\ref{fsq}), we rewrite 
($d$), ($d_1$), ($d_2$), ($d_3$),
($h$), ($h_4$) in Sect.\,\ref{ts} as
\begin{enumerate}
\item[] $4\hh[2(\mu_+\w\ly_{11}\w+\mu_-\w\ly_{33}\w)
+\mu_+\w\ly_2\w\sj\hn_1\w]\ly\sy^2\nh
=(\ly-6\hh\sy)\ly\sy\nh\ly_2^2+64\ly^2\nnh\sy^4$
\item[] \hskip40pt$+(20\hh\sy^3\nh+16\ly\sy^2+\ly^2\nnh\sy-\ly^3)\ly_1^2
+(20\hh\sy^3\nh-16\ly\sy^2+\ly^2\nnh\sy+\ly^3)\ly_3^2$,
\item[] $4\hh[2(\mu_-\w\ly_{22}\w+\mu_+\w\ly_{33}\w)
-\mu_-\w\ly_1\w\sj\hn_2\w]\ly\sy^2\nh=(\ly+6\hh\sy)\ly\sy\nh\ly_1^2
-64\ly^2\nnh\sy^4$
\item[] \hskip40pt$+(20\hh\sy^3\nh-16\ly\sy^2+\ly^2\nnh\sy+\ly^3)\ly_2^2
+(20\hh\sy^3\nh+16\ly\sy^2+\ly^2\nnh\sy-\ly^3)\ly_3^2$,
\item[] $4\hh(2\mu_-\w\ly_{11}\w+\mu_-\w\ly_2\w\sj\hn_1\w
-\mu_+\w\ly_3\w\sj\nnh_4\w)\ly\sy^2\nh=\ly\sy\hh[(\ly+6\hh\sy)\ly_2^2
+(\ly-6\hh\sy)\ly_3^2]$
\item[] \hskip137.5pt$+(20\hh\sy^3\nh-16\ly\sy^2+\ly^2\nnh\sy+\ly^3)\ly_1^2
-64\ly^2\nnh\sy^4\nnh$,
\item[] $4\hh(2\mu_+\w\ly_{22}\w-\mu_+\w\ly_1\w\sj\hn_2\w
-\mu_-\w\ly_3\w\sj\nnh_4\w)\ly\sy^2=\ly\sy\hh[(\ly-6\hh\sy)\ly_1^2
+(\ly+6\hh\sy)\ly_3^2]$
\item[] \hskip137.5pt$+(20\hh\sy^3\nh+16\ly\sy^2+\ly^2\nnh\sy-\ly^3)\ly_2^2
+64\ly^2\nnh\sy^4\nnh$,
\item[] $4\hh[2(\ly_{11}\w+\ly_{22}\w)
+\ly_2\w\sj\hn_1\w-\ly_1\w\sj\hn_2\w]\ly\sy^2\nh
=10\hh\sy^2\nh(\ly_1^2+\ly_2^2)
-\ly^2\nnh\ly_3^2-16\ly^3\nnh\sy^2\nh$,
\item[] $4\hh(2\ly_{33}\w-\ly_3\w\sj\nnh_4\w)\ly\sy^2\nh
=10\hh\sy^2\nnh\ly_3^2-\ly^2\nh(\ly_1^2+\ly_2^2)+16\ly^3\nnh\sy^2\nh$.
\end{enumerate}
Subtracting the sum of the last two equations above, multiplied by $\,4\sy$,
from the sum of the first four, we get 
$\,8\ly^2\nnh\sy\nh(\ly_1^2+\ly_2^2+\ly_3^2)=0$, which 
contradicts (\ref{nbl}).



\begin{thebibliography}{99}

\bibitem{arias-marco-kowalski}T.\,Arias-Marco and O.\,Kowalski,
\textit{Classification of $4$-dimensional homogeneous weakly Ein\-stein
 manifolds}, Czechoslovak Math.\,J. \textbf{65}(140) (2015), no.\,1,
\hbox{21\hs--59}.

\bibitem{besse}A.\,L.\,Besse, \textit{Einstein Manifolds}, Ergebnisse der
Mathematik und ihrer Grenzgebiete (3), Springer-Verlag, Berlin, 1987.

\bibitem{caeiro-oliveira-marino-villar}S.\,Caeiro-Oliveira and
R.\,Mari\~no-Villar, \textit{An algebraic characterization of
weak\-ly-Ein\-stein hyper\-sur\-faces in space forms}, 
J.\,Geom.\,Phys. \textbf{214} (2025), art.\,105530, 10 pp.

\bibitem{derdzinski-00} A.\,Derdzinski, \textit{Ein\-stein metrics in
dimension four}, Handbook of Differential Geometry, vol. I,
pp.\,\hbox{419\hs--707}. North-Holland, Amsterdam (2000).

\bibitem{derdzinski-euh-kim-park} A.\,Derdzinski, Y\nnh.\,Euh, S.\,Kim and
J.\,H.\,Park, \textit{On weakly Ein\-stein K\"ah\-ler surfaces}, 
In\-ter\-nat.\,J.\,Math. \textbf{36} (2025), no.\,14, art.\,2550057, 23 pp.

\bibitem{derdzinski-park-shin-ct} 
A.\,Derdzinski, J.\,H.\,Park, and W\nnh.\,Shin, {\it Weakly Einstein
curvature tensors}, preprint, available from 
\hbox{https:/\hskip-1.6pt/arxiv.org/pdf/2504.18752.}

\bibitem{derdzinski-park-shin-cp}A.\,Derdzinski, J.\,H.\,Park and
W\nnh.\,Shin, \textit{Weakly Ein\-stein con\-for\-mal products}, preprint,
available from https:/\hskip-1.7pt/arxiv.org/abs/2512.05173.

\bibitem{derdzinski-piccione-terek}A.\,Derdzinski,
P\nnh.\,Piccione and I.\,Terek, 
{\em Nijen\-huis geometry of parallel tensors}, Ann.\,Mat. Pu\-ra Appl. (4)
\textbf{204} (2025), no.\,4, 
1381--1401.

\bibitem{euh-park-sekigawa-rm}
Y\nnh.\,Euh, J.\,H.\,Park and K.\,Sekigawa, \textit{A curvature identity on a
$\,4$-dimensional Riemannian manifold}, Results Math. \textbf{63} (2013),
no.\,1-2, 107--114. 

\bibitem{euh-park-sekigawa-ms}
Y.\,Euh, J.\,H.\,Park and K.\,Sekigawa, \textit{A generalization of a $4$-dimensional Ein\-stein manifold}, Math. Slovaca \textbf{63} (2013), 
\hbox{595\hs--610}.

\bibitem{garcia-rio-haji-badali-marino-villar-vazquez-abal}
E.\,Garc\'\i a-R\'\i o, A.\,Haji-Badali, R.\,Mari\~no-Villar and
M.\,E.\,V\'azquez-Abal, \textit{Locally con\-for\-mal\-ly flat \
weakly-Ein\-stein manifolds}, Arch.\,Math.\ (Basel) \textbf{111} (2018),
no.\,5, \hbox{549\hs--559}.

\bibitem{kim-lebrun-pontecorvo}J.\,Kim, C.\,LeBrun and M.\,Pontecorvo, 
\textit{Sca\-lar-flat K\"ah\-ler surfaces of all ge\-ne\-ra},
J.\,reine angew.\,Math.\ \textbf{486} (1997), 69--95.

\bibitem{lebrun}C.\,LeBrun, \textit{Sca\-lar-flat K\"ah\-ler metrics on
blown-Up ruled surfaces}, J.\,reine angew.\,Math.\ \textbf{420} (1991),
161--177.

\bibitem{marino-villar}R.\,Mari\~no-Villar, \textit{Structure of locally
con\-for\-mal\-ly flat manifolds satisfying some weak\-ly-Ein\-stein
conditions}, J.\,Geom\,Phys. \textbf{186} (2023), art.\,104754, 8 pp.

\bibitem{tanno}
S.\,Tanno, \textit{$4$-dimensional conformally flat Kahler manifolds},
T\^ohoku Math.\,J. \textbf{24} (1972), no.\,3, 
\hbox{501\hs--504}.

\bibitem{wang-zhang}Y\nnh.\,Wang and Y\nnh.\,Zhang, \textit{Weakly Ein\-stein
real hyper\-sur\-faces in\/ $\mathbb CP^2$ and\/ $\mathbb CH^2\nh$,} 
J.\,Geom.\,Phys.\ \textbf{181} (2022), art.\,104648, 11 pp.

\end{thebibliography}
\end{document}